\renewcommand{\L}{\mathcal{L}}
\renewcommand{\O}{\mathcal{O}}
\newcommand{\F}{\mathcal{F}}
\renewcommand{\P}{\mathbb{P}}
\newcommand{\Z}{\mathbb{Z}}
\newcommand{\C}{\mathbb{C}}
\renewcommand{\dim}{\operatorname{dim}}
\newcommand{\Ext}{\operatorname{Ext}}
\newcommand{\rk}{\operatorname{rk}}
\renewcommand{\H}{\operatorname{H}}
\newtheorem{Lem}{Lemma}[section]
\newtheorem{Prop}[Lem]{Proposition}
\newtheorem{Cor}[Lem]{Corollary}
\newtheorem{Thm}[Lem]{Theorem}
\newtheorem*{Rem}{Remarks}
\newcommand{\Pk}{\operatorname{P^k}(\L_a)}
\patchcmd{\section}{\scshape}{\bfseries}{}{}
\renewcommand{\@secnumfont}{\bfseries}
\patchcmd{\@settitle}{\uppercasenonmath\@title}{}{}{}
\patchcmd{\@setauthors}{\MakeUppercase}{}{}{}
\newcommand\xleftrightarrow[2][]{%
  \ext@arrow 9999{\longleftrightarrowfill@}{#1}{#2}}
\newcommand\longleftrightarrowfill@{%
  \arrowfill@\leftarrow\relbar\rightarrow}
\title{Ulrich Bundles on Elliptic Curves and Theta Functions}
\author{Alexander Pavlov}
\address{University of Wisconsin, 480 Lincoln Dr., Madison, WI, USA}
\email{pavlov@math.wisc.edu}
\thanks{
This material is based upon work supported by the National Science Foundation under grant No.0932078 000 while the second author was in residence at the Mathematical Sciences Research Institute in Berkeley, California, during the 2015/2106 year.}
\date{}
 \subjclass[2010]{Primary: 
14H52, % elliptic curves
Secondary:
14H60,  % Vector bundles on curves and their moduli 
14K25 , % Theta functions
13C14, % MCM modules
}
\keywords{Elliptic curves, Hesse cubics, Ulrich modules, Moore matrices, 
Theta functions.}
\begin{document}
\begin{abstract}
Let $E$ be a smooth elliptic curve over $\C$. For $E$ embedded into $\P^2$ as Hesse cubic and $V$ an Ulrich bundle on $E$ we derive a explicit presentation of $V$ using Moore matrices and theta functions.
\end{abstract}
\maketitle

\section{Introduction}

Let $(E,o)$ be a smooth elliptic curve over $\C$. Elliptic curve $E$ is a quotient $E \cong \C/ \Lambda$, where $\Lambda = \Z \oplus \Z \tau$. We denote $z$ the local coordinate on $E$. Theta functions $\theta_0(z), \theta_1(z)$ and $\theta_2(z)$ are global sections of $\O(1)=\O(3o)$, in fact they form a basis of $H^0(E, \O(3o))$. Here we ignore dependence of theta functions on parameter $\tau$ and consider $\tau$ to be fixed. We use these sections to embed curve $E$ into projective plane. It is well known (see, for example, \cite{Dolg}) that theta functions satisfy the following identity
$$
\theta_0^3(z)+\theta_1^3(z)+\theta_2^3(z)-3 \psi \theta_0(z) \theta_1(z) \theta_2(z) = 0,
$$
where $\psi \in \C$ and $\psi^3 \neq -1$ is a parameter depending on $\tau$. Thus, theta functions embed $E$ as a Hesse cubic $w$ in $\P^2$.

In this paper we study Ulrich bundles on the Hesse cubic $E \in \P^2$ that is vector bundles $V$ on $E$ of rank $r$ with presentation
$$
0 \rightarrow \O(-1)_{\P^2}^{3r} \xrightarrow {\ A\ } \O_{\P^2}^{3r} \rightarrow V \rightarrow 0.
$$

Rank one Ulrich bundles correspond to determinantal presentations of the Hesse cubic $x_0^3+x_1^3+x_2^3-3\psi x_0 x_x x_2=0$ and are given by Moore matrices
$$
M_{a,x}=\begin{pmatrix}
a_0x_0 & a_2x_2 & a_1x_1 \\
a_2x_1 & a_1x_0 & a_0x_2 \\
a_1x_2 & a_0x_1 & a_2x_0
\end{pmatrix},
$$
where $a=[a_0:a_1:a_2] \in E\setminus E[3]$ and $E[3]$ is the group of points of order $3$ in $E$. We interpret this observation in terms of theta functions then use automorphy factors to relate Ulrich bundles and theta functions (and derivatives of theta functions). In the last section we show that derivatives of theta functions can be eliminated form the formulas and get a presentation for indecomposable Ulrich bundles of any rank that does not explicitly use theta functions.

An indecomposable Ulrich bundle $\F_a^{k+1}$ of rank $k+1$ has the following presentation
\begin{Thm}
The Ulrich bundle $\F_a^{k+1}$ of rank $k+1$ over elliptic curve $E$ is the cokernel of the matrix $A_{k+1}$
$$
\xymatrix{
0 \ar[r] & \O_{\P^2}^{3(k+1)} (-1) \ar[r]^{A_{k+1}} & \O_{\P^2}^{3(k+1)} \ar[r] & \F_a^{k+1} \ar[r] & 0,
}
$$
where
$$
A_{k+1} = \begin{pmatrix} 
M_{a,x} &  {k \choose 1} M_{2a,x} & {k \choose 2} M_{4a,x} & \ldots & {k \choose {k-1}} M_{(-a)^{k-1}a,x} & M_{(-2)^ka,x}\\
0 & M_{a,x} & { {k-1} \choose 1} M_{-2a,x} & \ldots & {{k-1} \choose {k-2}} M_{(-2)^{k-2}a,x} & M_{(-2)^{k-1}a,x} \\
0 & 0 & M_{a,x} & \vdots & {{k-2} \choose {k-3}} M_{-2a,x} & M_{(-a)^{k-2}a,x} \\
\vdots & \vdots & \vdots & \ddots & \vdots & \vdots \\
0 & 0 & 0 & \ldots & M_{a,x} & M_{-2a,x} \\ 
0 & 0 & 0 & \ldots & 0 & M_{a,x}
\end{pmatrix}.
$$
\end{Thm}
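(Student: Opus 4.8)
The plan is to identify $\F_a^{k+1}$ with the bundle of principal parts $\Pk$ and to obtain the matrix $A_{k+1}$ by making the jet filtration explicit, eliminating the theta-function derivatives that occur along the way.

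First I would recall Atiyah's classification. Since the rank $k+1$ divides the degree $3(k+1)$ forced by the Ulrich condition, the slope is the integer $3$ and the indecomposable Ulrich bundle of rank $k+1$ is $F_{k+1}\otimes\L_a$, where $F_{k+1}$ is the Atiyah bundle, the unique indecomposable bundle of rank $k+1$ and degree $0$ realized as an iterated self-extension of $\O_E$. Because $\Omega_E\cong\O_E$, the principal-parts sequence $0\to\L_a\to\Pk\to P^{k-1}(\L_a)\to 0$ exhibits $\Pk$ as exactly such an iterated self-extension of $\L_a$, so $\F_a^{k+1}\cong\Pk$. Tensoring the Atiyah filtration and splicing the associated $3(k+1)\times 3(k+1)$ presentations, I would argue by induction on $k$ that the presenting matrix is block upper-triangular with the Moore matrix $M_{a,x}$ on the diagonal; the off-diagonal blocks encode the successive extension classes living in $\Ext^1(\L_a,\L_a)\cong H^1(E,\O_E)\cong\C$.

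Next I would compute these off-diagonal blocks on the analytic side. Using the automorphy-factor description set up earlier, a presentation of $\Pk$ has entries built from the theta functions $\theta_0,\theta_1,\theta_2$ evaluated at translates of $a$ together with their first $k$ derivatives, the derivatives being precisely the jet coordinates. The binomial coefficients $\binom{k+1-i}{\,j-i}$ then emerge from the Leibniz/Taylor expansion governing the $k$-jets, while the doubled arguments $(\pm 2)^{\,j-i}a$ are produced by the duplication identities for theta functions on the Hesse cubic, which rewrite a product $\theta_p(z)\theta_q(z)$ as a linear combination of the $\theta_r(2z)$.

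The main obstacle is this elimination step: proving that every derivative $\theta_r^{(m)}$ appearing in the jet presentation can be replaced, coherently across all blocks and compatibly with the $\O_{\P^2}(-1)$-linearity of the map, by a single Moore matrix $M_{(\pm 2)^{\,j-i}a,x}$ carrying the stated binomial weight. This demands careful bookkeeping of the addition and duplication relations among $\theta_0,\theta_1,\theta_2$ and is where the bulk of the computation lives. Once $A_{k+1}$ is in hand I would finish by verifying that $\det A_{k+1}=w^{k+1}$, so that the cokernel is supported on $E$ with multiplicity $k+1$; the upper-triangular shape then forces the cokernel to be an iterated extension of the line bundles $\L_a$, hence locally free of rank $k+1$, and the nonvanishing of the extension classes gives indecomposability, matching $F_{k+1}\otimes\L_a\cong\F_a^{k+1}$.
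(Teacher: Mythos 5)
Your outline follows the paper's own strategy for most of its length: the identification $\F_a^{k+1}\cong F_{k+1}\otimes\L_a\cong\operatorname{P}^k(\L_a)$ via Atiyah's classification, and the block upper-triangular jet presentation with diagonal blocks $M_{a,x}$ and binomially weighted derivative blocks, are exactly what the paper establishes first (with one precision: the relevant derivatives are taken in the parameter $a$, i.e.\ the off-diagonal blocks are $\binom{k+1-i}{j-i}M_{a,x}^{(j-i)}$ with $M_{a,x}^{(m)}=\partial^m M_{a,x}/\partial a^m$, obtained by repeatedly differentiating the identity $M_{a,x}\,\bigl(\theta_i(z+a)\bigr)_i=0$ in $a$). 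The genuine gap is in the step you yourself flag as ``where the bulk of the computation lives'': you never supply the mechanism that converts derivative blocks into Moore matrices at doubled arguments, and the mechanism you do name cannot work. There is no identity expressing a product $\theta_p(z)\theta_q(z)$ as a linear combination of the $\theta_r(2z)$: the former are sections of a line bundle of degree $6$, while the latter are sections of the degree-$12$ bundle $[2]^*\O(3o)$, so no such linear relation can exist. The duplication that actually enters is the group law on the Hesse cubic (the tangent line at $a$ meets $E$ again at $-2a$), and it enters through derivatives in $a$, not through products in $z$.

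What the paper proves, and what your plan is missing, is the relation
$$
M_{a,x}'\doteq s(a)\,M_{a,x}+M_{-2a,x},
$$
where $s(a)$ is an \emph{undetermined} function of $a$: specializing the differentiated Moore identities at $z=-a$ yields a linear system for $(\theta_0'(a),\theta_2'(a),\theta_1'(a))$ whose coefficient matrix is antisymmetric of rank two, so the solution is determined only up to adding $s(a)\,(\theta_0(a),\theta_2(a),\theta_1(a))$, the coordinates of $-2a$ furnishing a particular solution. In particular $M_{a,x}'$ is genuinely \emph{not} proportional to $M_{-2a,x}$, so your claim that each derivative block ``can be replaced by a single Moore matrix $M_{(\pm2)^{j-i}a,x}$'' is false as an equality of matrices. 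The replacement is legitimate only up to block row/column operations that add multiples of the diagonal blocks $M_{a,x}$ (and, for higher ranks, of lower-order derivative blocks, to kill the terms involving $s^{(i)}(a)$); these operations are harmless precisely because they do not change the cokernel, and this inductive clean-up --- not an identity among theta functions --- is the actual content of the paper's proof. Note also that since the paper reaches $A_{k+1}$ by cokernel-preserving operations from the already-established presentation of $\operatorname{P}^k(\L_a)$, nothing like your closing determinant check is needed; conversely, verifying $\det A_{k+1}=w^{k+1}$ and the triangular shape would not by itself identify the cokernel with $\F_a^{k+1}$, since you would still owe an argument that the extensions encoded by the off-diagonal blocks are non-split.
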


All vector bundles in this paper are holomorphic. 

\section{Preliminaries}

In this section collect preliminaries used in the paper and give references for more detailed papers.

\subsection{Atiyah's Bundles over an Elliptic Curve}

On an elliptic curve $E$ there is a special discrete family of vector bundles, usually denoted by $F_r$ and parameterized by positive integer number $r \in \Z_{>0}$\,. By definition $F_1=\O$, and the vector bundle $F_r$ is defined inductively through the unique non-trivial extension:
$$
0 \to \O_E \to F_r \to F_{r-1} \to 0\,.
$$
It is easy to see that $\deg(F_r)=0$ and $\rk(F_r)=r$ for $r \in \Z_{>0}$\,. We call $F_r$, $r \in \Z_{>0}$, Atiyah bundles. For details, see \cite{Atiyah57}.

\subsection{Ulrich Bundles}

In this papers we only deal with Ulrich bundles on an elliptic curve embedded into projective plane $\P^2$ as Hesse Cubic. Thus we only define Ulrich bundle for hypersurfaces, for more general varieties see \cite{ES} and \cite{Beauville}. If $H$ is a hypersurface of degree $d$ in projective space $\P^n$. A vector bundle $V$ of rank $r$ on $H$ is called Ulrich if there is matrix $A$ of linear forms such that
$$
0 \rightarrow \O(-1)_{\P^n}^{rd} \xrightarrow {\ A\ } \O_{\P^n}^{rd} \rightarrow V \rightarrow 0.
$$
We call $A$ the matrix of the Ulrich Bundle $E$. Ulrich bundles can be characterized in terms of cohomology groups.

\begin{Prop} A vector bundle $V$ is Ulrich if and only if the cohomology $H^\bullet (H, V(-p))$ vanishes for $1\leq p\leq n-1.$
\end{Prop}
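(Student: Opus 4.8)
The statement is an equivalence, and I would prove the two implications separately; the forward implication is a short diagram chase, while the converse carries the weight.

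For the forward direction, assume $V$ is Ulrich and let $i\colon H\hookrightarrow\P^n$ be the inclusion. Pushing the defining presentation forward to $\P^n$ and twisting by $\O(-p)$ gives the short exact sequence
$$0\to\O_{\P^n}(-1-p)^{rd}\to\O_{\P^n}(-p)^{rd}\to i_*V(-p)\to 0.$$
I would then feed this into the long exact cohomology sequence and invoke the standard computation of line-bundle cohomology on $\P^n$: for a line bundle only $H^0$ (in nonnegative twists) and $H^n$ (in twists $\le -n-1$) can be nonzero. For $1\le p\le n-1$ the two flanking bundles carry twists in $[-(n-1),-1]$ and $[-n,-2]$ respectively, so their $H^0$ vanishes (negative twist) and their $H^n$ vanishes (twist $>-n-1$), while all intermediate cohomology of a line bundle on $\P^n$ is automatically zero. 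Hence every term flanking $i_*V(-p)$ has vanishing cohomology and $H^\bullet(H,V(-p))=H^\bullet(\P^n,i_*V(-p))=0$ on exactly the asserted range; note the range is sharp, since at $p=n$ the term $\O(-1-p)^{rd}=\O(-n-1)^{rd}$ reacquires $H^n$.

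For the converse I would pass to $\mathcal V:=i_*V$ and its graded module of sections $M:=\bigoplus_jH^0(\P^n,\mathcal V(j))$ over $S=\C[x_0,\dots,x_n]$, and reduce the problem to showing that the minimal free resolution of $M$ is the linear length-one resolution $0\to S(-1)^a\to S^b\to M\to 0$, which sheafifies to the Ulrich presentation. First, $\mathcal V$ is $0$-regular: $0$-regularity asks $H^i(\mathcal V(-i))=0$ for $i\ge 1$, which for $1\le i\le n-1$ is the hypothesis and for $i=n$ is automatic since $\mathcal V$ is supported on the $(n-1)$-dimensional $H$. The \emph{crux} is to upgrade the finite-range hypothesis to the full maximal Cohen--Macaulay condition $H^i(\mathcal V(j))=0$ for all $1\le i\le n-2$ and all $j$. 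For this I would observe that the twisted dual $U:=V^\vee(d-1)$ satisfies the \emph{same} hypothesis: by Serre duality on the Gorenstein hypersurface $H$ (with $\omega_H=\O_H(d-n-1)$) one computes $H^i(U(-p))\cong H^{n-1-i}(V(p-n))^*$, and as $p$ runs over $[1,n-1]$ the twist $p-n$ runs over $[-(n-1),-1]$, exactly where the hypothesis forces vanishing. Thus $i_*U$ is $0$-regular as well. Combining the two regularities, the $0$-regularity of $\mathcal V$ kills the intermediate $H^i(\mathcal V(j))$ for $j\ge -i$, while that of $i_*U$, transported back through Serre duality, kills it for $j\le -i-1$; for each $1\le i\le n-2$ these two half-lines are complementary and exhaust all $j$. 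By the local-cohomology dictionary $H^{i+1}_{\mathfrak m}(M)_j\cong H^i(\mathcal V(j))$ this says $M$ is maximal Cohen--Macaulay, so $\operatorname{pd}_SM=1$ by Auslander--Buchsbaum and the resolution is $0\to F_1\to F_0\to M\to 0$. The same two bounds pin the degrees: $0$-regularity gives $\operatorname{reg}(M)=0$, so $F_0$ sits in degrees $\le 0$ and $F_1$ in degrees $\le 1$, while $i_*U$-regularity forces $H^0(\mathcal V(j))=0$ for $j\le -1$ (hence $F_0=S^b$ generated in degree $0$) and minimality puts $F_1$ in degrees $\ge 1$ (hence $F_1=S(-1)^a$). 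Finally $a=b$ because $\mathcal V$ is a torsion sheaf on $\P^n$, and matching the leading term of the Hilbert polynomial with $\deg\mathcal V=rd$ gives $a=b=rd$.

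The \emph{main obstacle} is precisely this passage from the finite window $1\le p\le n-1$ to all-twist intermediate vanishing: $0$-regularity alone controls only the upper half of the cohomology table and would permit an infinite resolution. The device that closes the gap is the self-duality of the Ulrich condition under $V\mapsto V^\vee(d-1)$, which supplies the complementary lower half; once maximal Cohen--Macaulayness is secured, the reduction to a two-term linear resolution and the rank and degree bookkeeping are routine.
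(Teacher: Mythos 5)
Your argument is correct, but there is nothing in the paper to compare it against: the paper states this proposition without proof, treating it as a known characterization and referring the reader to Eisenbud--Schreyer and Beauville. Your proof is essentially the standard argument from that literature, and both directions check out. The forward direction (long exact sequence against the vanishing of $H^\bullet(\P^n,\O(-q))$ for $1\le q\le n$) is exactly right, including the observation that the range $1\le p\le n-1$ is sharp. In the converse, the key move --- using the Ulrich dual $U=V^\vee(d-1)$ and Serre duality with $\omega_H=\O_H(d-n-1)$ to convert the hypothesis into $0$-regularity of \emph{both} $i_*V$ and $i_*U$, so that the two regularity half-planes $j\ge -i$ and $j\le -i-1$ jointly kill all intermediate cohomology --- is the correct device, and it is indeed the crux; $0$-regularity of $i_*V$ alone would not suffice. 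The remaining bookkeeping (local cohomology dictionary with $H^0_{\mathfrak m}(M)=H^1_{\mathfrak m}(M)=0$ since $M=\Gamma_*(\mathcal V)$, Auslander--Buchsbaum giving $\operatorname{pd}_S M=1$, degree bounds pinning $F_0=S^b$ and $F_1=S(-1)^a$, and $a=b=rd$ from torsionness and the Hilbert polynomial) is complete and accurate. So your proposal supplies a correct, self-contained proof of a statement the paper only cites.
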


More details on Ulrich bundles can found in \cite{ES} and \cite{Beauville}.

\subsection{Moore Matrices and Theta Functions}

Let $a \neq 0$ be a point on the elliptic curve, and let $\L_a = t_{-a}^* \O(1)$. Line bundle $\L_a$ is a rank $1$ Ulrich bundle and any Ulrich bundle of rank $1$ can obtained in this way. The basis of sections of the line bundle $\L_a$ is the translated theta functions $\theta_i(z+a)$, where $i=0,1,2$. The pair of line bundles $\O(1)$ and $\L_a$ give an embedding of $E$ into $\P^2 \times \P^2$. If we use coordinates $x_i$ on the first copy of $\P^2$ and denote coordinates $y_i$ on the second copy of $\P^2$ then equations of $E$ in $\P^2 \times \P^2$ can we written using Moore matrix 
$$
M_{a,x}=\begin{pmatrix}
a_0x_0 & a_2x_2 & a_1x_1 \\
a_2x_1 & a_1x_0 & a_0x_2 \\
a_1x_2 & a_0x_1 & a_2x_0
\end{pmatrix}
$$
as 
$$
M_{a,x}\begin{pmatrix} y_0 \\ y_1 \\ y_2 \end{pmatrix} = 0.
$$
For details on Moore matrices see \cite{BP2015}. We get three equations of degree $(1,1)$ in $\P^2 \times \P^2$. Note that on the elliptic curve $a_i=\theta_i(a)$, $x_i=\theta_i(z)$ and $y_i=\theta_i(z+a)$ for $i=0,1,2$ then Moore matrix $M$ can be written as
$$
M_{a,x} = \begin{pmatrix}
\theta_0(a)\theta_0(z) & \theta_2(a)\theta_2(z) & \theta_1(a)\theta_1(z) \\
\theta_2(a)\theta_1(z) & \theta_1(a)\theta_0(z) & \theta_0(a)\theta_2(z) \\
\theta_1(a)\theta_2(z) & \theta_0(a)\theta_1(z) & \theta_2(a)\theta_0(z)
\end{pmatrix}.
$$
and we obtain three identities for theta functions:
\begin{gather*}
\theta_0(a)\theta_0(z)\theta_0(z+a)+\theta_2(a)\theta_2(z)\theta_1(z+a)+\theta_1(a)\theta_1(z)\theta_2(z+a)=0, \\
\theta_2(a)\theta_1(z)\theta_0(z+a)+\theta_1(a)\theta_0(z)\theta_1(z+a)+\theta_0(a)\theta_2(z)\theta_2(z+a)=0, \\
\theta_1(a)\theta_2(z)\theta_0(z+a)+\theta_0(a)\theta_1(z)\theta_1(z+a)+\theta_2(a)\theta_0(z)\theta_2(z+a)=0.
\end{gather*}
These identities are well known to specialists, cf. (\cite{O2002}, Appendix A).

\subsection{Sheaves of Principal Parts} 

Let $X$ be a smooth variety, we consider product $X \times X$ with two projections 

$$
\xymatrix{
& X \times X \ar[dl]_{\pi_1} \ar[dr]^{\pi_2}\\
X  && X}
$$

Let $\Delta: X \to X \times X$ be the diagonal embedding and $\O_\Delta = \Delta_*(\O_X)$, we set $I$ to be the sheaf of ideas of $\O_\Delta$

$$
0 \to I \to \O_{X \times X} \to \O_\Delta \to 0. 
$$

Let $V$ be a vector bundle on $X$ then the sheaf of principal parts (also known as jet bundles in differential geometry) $\operatorname{P}^k(V)$ of order $k \geq 0$ of vector bundle $V$ is defined by 
$$
\operatorname{P}^k(V) = (\pi_1)_*(\O_{X \times X} /I^{k+1} \otimes_{X \times X} \pi_2^*(V)).
$$

In particular, the sheaf of principal parts of order zero is $P^0(V)=V$.

For sheaves of principal parts there is short exact sequence
$$
0\to V \otimes_X \operatorname{Sym}^{k}(\Omega_X^1) \to \operatorname{P}^k(V) \to \operatorname{P}^{k-1}(V) \to 0.
$$

Details and proofs can be found in \cite{Perkinson}, appendix A.

\section{Automorphy factors of Ulrich bundles}

Isomorphism classes of vector bundles on $E$ of rank $r \geq 1$ are in bijection with first cohomology group of $\Lambda$ with coefficients in $GL_r(\O)$, see \cite{Iena2010}, so there is a bijection of sets:

$$
\left\{\begin{gathered}
\text{Isomorphism classes}\\
\text{of vector bundles of rank } r
\end{gathered}\right\} \xleftrightarrow{1:1} H^1(\Lambda, GL_r(\O)).
$$

A cohomology class in $\H^1(\Lambda, GL_r(\O))$ corresponding to a vector bundle $\F$ called the automorphy factor of the vector bundle $\F$.

Most commonly automorphy factors are used to describe line bundles. For a line bundle $\L$ the automorphy factor is a non-vanishing function $e(\lambda, z) \in \mathcal{O}^*(E)$ for every $\lambda \in \Lambda$, satisfying the cocycle condition
$$
e(\lambda+\mu,z)=e(\lambda, z+\mu)e(\mu, z),
$$
for $\mu, \lambda \in \Lambda$. In particular, one can use automorphy factors of line bundles to define theta functions. By definition, theta functions of the line bundle $\L$ satisfy condition
$$
e(\lambda, z)\theta(z) = \theta(z+\lambda).
$$

Our goal in this section is to compute the automorphy factors Ulrich bundles and describe analytically their sections in terms of theta functions. We will use these results to compute matrix factorizations of Ulrich bundles in the next section. We fix parameter $a \in E$, $a\neq o$ and $k \geq 1$.

\begin{Thm} Let $\L_a$ be an Ulrich bundle of rank one. Then any indecomposable Ulrich bundle $\F_a^r$ of rank $r=k+1$ can be obtained as a sheaf of principal parts 
$$
\F_a^r=\operatorname{P^k}(\L_a).
$$
\end{Thm}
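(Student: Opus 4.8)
The plan is to identify $\Pk$ with the indecomposable Ulrich bundle $\F_a^{k+1}$, which is $F_{k+1} \otimes \L_a$, by first extracting numerical and cohomological data from the principal-parts filtration and then matching automorphy factors. I would begin from the short exact sequence recalled in the preliminaries, $0 \to V \otimes \operatorname{Sym}^{k}(\Omega_E^1) \to \operatorname{P}^k(V) \to \operatorname{P}^{k-1}(V) \to 0$, applied to $V = \L_a$. Since $E$ is an elliptic curve, $\Omega_E^1 \cong \O_E$, so $\operatorname{Sym}^k(\Omega_E^1) \cong \O_E$ and the sequence collapses to
$$
0 \to \L_a \to \Pk \to \operatorname{P}^{k-1}(\L_a) \to 0.
$$
Inductively this yields $\rk \Pk = k+1$ and, because $\deg \L_a = 3$, both $\deg \Pk = 3(k+1)$ and $\det \Pk \cong \L_a^{\otimes (k+1)}$, which are the correct numerical invariants of a rank-$(k+1)$ Ulrich bundle.

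Next I would verify that $\Pk$ is Ulrich by the cohomological criterion of the Proposition, i.e.\ $\H^\bullet(E, \Pk(-1)) = 0$. Twisting the collapsed sequence by $\O(-1)$ and passing to the long exact cohomology sequence, the base case is $\H^\bullet(E, \L_a(-1)) = 0$, which holds precisely because $\L_a$ is Ulrich of rank one. The long exact sequence then propagates the vanishing from $\L_a(-1)$ and $\operatorname{P}^{k-1}(\L_a)(-1)$ to $\Pk(-1)$, so by induction $\Pk$ is Ulrich of rank $k+1$.

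The heart of the argument is the automorphy-factor computation, which is where the binomial structure of the matrix in the main theorem originates. A local section of $\Pk$ is a $k$-jet, represented in the basis of $\L_a$ by the vector of derivatives $(\theta_i(z+a), \theta_i'(z+a), \ldots, \theta_i^{(k)}(z+a))$. Differentiating the defining automorphy relation $e(\lambda, z)\,\theta(z+a) = \theta(z+a+\lambda)$ repeatedly in $z$ and regrouping by the Leibniz rule expresses the transformation of a $k$-jet under $z \mapsto z+\lambda$ as multiplication by an upper-triangular matrix whose diagonal entries all equal the scalar factor $e(\lambda, z)$ of $\L_a$ and whose off-diagonal entries are derivatives of $e(\lambda, z)$ weighted by binomial coefficients. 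I would then show that this matrix cocycle is cohomologous in $\H^1(\Lambda, GL_{k+1}(\O))$ to the product of the scalar $e(\lambda, z)$ with a constant unipotent Jordan block, which is exactly the automorphy factor of $F_{k+1} \otimes \L_a$. By the bijection between isomorphism classes of rank-$(k+1)$ bundles and $\H^1(\Lambda, GL_{k+1}(\O))$ recalled at the start of the section, this gives $\Pk \cong F_{k+1}\otimes \L_a = \F_a^{k+1}$.

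The step I expect to be the main obstacle is this last gauge equivalence: showing that the $z$-dependent, derivative-laden off-diagonal entries can genuinely be transformed to constants, rather than to some non-split cocycle producing a decomposable bundle. An alternative route that avoids the explicit gauge is to argue indecomposability abstractly: the extension $0 \to \L_a \to \operatorname{P}^1(\L_a) \to \L_a \to 0$ is non-split because its class is the Atiyah class of $\L_a$, nonzero since $\deg \L_a = 3 \neq 0$, so $\operatorname{P}^1(\L_a)$ is indecomposable; iterating up the filtration, $\Pk$ is then an indecomposable Ulrich bundle of rank $k+1$ containing $\L_a$ as its unique maximal-degree sub-line bundle, and Atiyah's classification of indecomposable bundles of rank $k+1$ and degree $3(k+1)$ forces $\Pk \cong F_{k+1}\otimes \L_a$. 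In either approach the decisive input is the triviality $\Omega_E^1 \cong \O_E$, which makes every graded piece of the jet filtration isomorphic to $\L_a$ and pins the determinant to $\L_a^{\otimes(k+1)}$.
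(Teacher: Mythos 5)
Your groundwork is correct and in places cleaner than the paper's own: the collapse of the jet sequence via $\Omega_E^1 \cong \O_E$, the computation of rank, degree and determinant, and the inductive verification of $\operatorname{H}^\bullet(E, \Pk(-1)) = 0$ from the twisted sequence are all fine (the paper instead argues via degree and generation in degree $0$). The problem is that the heart of the theorem --- the identification $\Pk \cong F_{k+1} \otimes \L_a$ --- is not established by either of your routes. Your primary route stops at its decisive step, as you yourself note: you never show the jet cocycle is cohomologous to $e_a(\lambda,z)$ times a constant unipotent matrix. (The paper does not do this either; it computes the jet automorphy factor only \emph{after} the theorem, as a corollary of it, not as a proof.) Your fallback route has a concrete gap at ``iterating up the filtration'': a non-split extension of an indecomposable bundle by a line bundle need \emph{not} be indecomposable. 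For instance, pick distinct points $p \neq q$ on $E$ and sections $s_1 \in H^0(\O(p))$, $s_2 \in H^0(\O(q))$; since $s_1, s_2$ have no common zero,
$$
0 \to \O_E \xrightarrow{\ (s_1,\,s_2)\ } \O(p) \oplus \O(q) \to \O(p+q) \to 0
$$
is exact, and it is non-split because $\Hom(\O(p+q), \O(p)) = H^0(\O(-q)) = 0$ and likewise $\Hom(\O(p+q), \O(q)) = 0$; yet the middle term is decomposable while the quotient is indecomposable. So non-splitness of each jet extension (which you in any case only verify at the first stage, via the Atiyah class) does not by itself push indecomposability up to $\Pk$.

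The paper closes exactly this gap by strengthening the inductive hypothesis: one inducts on the isomorphism $\operatorname{P}^{k-1}(\L_a) \cong F_k \otimes \L_a$ itself, not merely on indecomposability. Granting it, $\Ext^1(\operatorname{P}^{k-1}(\L_a), \L_a) \cong \Ext^1(F_k \otimes \L_a, \L_a) \cong \operatorname{H}^1(E, F_k^\vee) \cong \C$ is one-dimensional, and it contains the class of the jet extension $0 \to \L_a \to \Pk \to \operatorname{P}^{k-1}(\L_a) \to 0$ as well as the class of $0 \to \L_a \to F_{k+1}\otimes\L_a \to F_k \otimes \L_a \to 0$. Both being nonzero, they are proportional, and proportional classes in $\Ext^1$ have isomorphic middle terms (push out along the scalar automorphism $\rho$ of $\L_a$); hence $\Pk \cong F_{k+1} \otimes \L_a$, which is indecomposable because $F_{k+1}$ is. This yields, in one stroke, indecomposability and the identification needed to conclude that every indecomposable Ulrich bundle of rank $k+1$ arises this way, with no gauge equivalence of cocycles and no separate uniqueness-of-maximal-sub-line-bundle argument. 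To repair your proof, replace ``iterating up the filtration'' by precisely this comparison of extension classes (your Atiyah-class observation then serves as a justification, absent from the paper, that the jet class is nonzero at the base of the induction).
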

\begin{proof}
We prove this statement by induction. We start with the $\rk(F_a)=2$. We have Atiyah's short exact sequence
$$
0 \to \L_a \to \operatorname{P^1}(\L_a) \to \L_a \to 0.
$$
From which we see that $\deg(\operatorname{P^1}(\L_a))=6$ and $\operatorname{P^1}(\L_a)$ is generated in degree 0 and, thus, it is Ulrich. We only need to show that it is indecomposable. For this we note that in the extension group $\Ext(\L_a, \L_a) \cong \C$ there is another generator obtained by twisting short exact sequence defining Atiyah's vector bundle $F_2$
$$
0 \to \O \to F_2 \to \O \to 0,
$$
by $\L_a$
$$
0 \to \L_a \to F_2 \otimes \L_a \to \L_a \to 0.
$$
Therefore, we have two generators in one-dimensional vector space $\Ext^1(\L_a, \L_a)$, they only can different by a multiplication on a non-zero constant $\rho$. Multiplication by $\rho$ induces a non-trivial automorphism of $\L_a$ and an isomorphism of two short exact sequences:
$$
\xymatrix{
0 \ar[r] & \L_a \ar[r]  \ar[d]_\rho^{\cong} &\operatorname{P^1}(\L_a) \ar[r] \ar[d]^{\cong} & \L_a \ar[r] \ar@{=}[d] & 0\\
0 \ar[r] & \L_a \ar[r]  &F_2 \otimes \L_a \ar[r] & \L_a \ar[r] & 0.
}
$$
Therefore we have isomorphism of the  middle terms
$$
\operatorname{P^1}(\L_a) \cong F_2 \otimes \L_a.
$$
Because we know that $F_2$ is indecomposable we conclude that $\operatorname{P^1}(\L_a)$ is indecomposable. Thus, we obtained a family of non-isomorphic indecomposable Ulrich bundles of rank two parametrized by $a \in E$, $a \neq o$. This construction give us all indecomposable Ulrich bundles of rank two.

Generalization for higher ranks is straightforward by induction if we notice that indecomposability follows from comparing two short exact sequences: for higher sheaves of principal parts
$$
0 \to \L_a \to \operatorname{P^k}(\L_a) \to \operatorname{P^{k-1}}(\L_a)  \to 0,
$$
and short exact sequence defining Atiyah's bundles $F_{k+1}$ tensored by $\L_a$
$$
0 \to \L_a \to F_{k+1} \otimes \L_a \to F_{k} \otimes \L_a \to 0.
$$
They again give us two nontrivial generators in the one-dimensional vector space $\Ext^1(\operatorname{P^{k}}(\L_a) , \L_a)$, which is isomorphic to  $\Ext^1(F_{k+1}(\L_a) , \L_a)$ by the induction hypothesis.
\end{proof}

Now we can obtain formulas for automorphy factors of Ulrich bundles of rank $k$. The proof of the following corollary is a standard computation with principal parts, we include it for completeness of the exposition. 

\begin{Cor}
If $e_a(\lambda, z)$ is the automorphy factor of the Ulrich line bundle $\L_a$ then the automorphy factor of $\F_a^{k+1} =  \operatorname{P^k}(\L_a)$ is 
$$
f_a(\lambda,z) = 
\begin{pmatrix} 
e_a(\lambda,z) &  {k \choose 1} e'_a(\lambda, z) & {k \choose 2} e''_a(\lambda, z) & \ldots & {k \choose {k-1}} e^{(k-1)}_a(\lambda,z) & e^{(k)}_a(\lambda,z) \\
0 & e_a(\lambda,z) & { {k-1} \choose 1} e'_a(\lambda, z) & \ldots & {{k-1} \choose {k-2}} e^{(k-2)}_a(\lambda, z) & e_a^{(k-1)}(\lambda,z) \\
0 & 0& e_a(\lambda,z) & \vdots & {{k-2} \choose {k-3}} e^{(k-3)}_a(\lambda,z) & e^{(k-2)}_a(\lambda,z) \\
\vdots & \vdots & \vdots & \ddots & \vdots & \vdots \\
0 & 0 & 0 & \ldots & e_a(\lambda, z) & e'_a(\lambda ,z) \\ 
0 & 0 & 0 & \ldots & 0 & e_a(\lambda, z) 
\end{pmatrix},
$$
where we use short notation for derivatives $e^{(i)}_a(\lambda,z) = \dfrac{d^i }{dz^i} e_a(\lambda,z)$.
\end{Cor}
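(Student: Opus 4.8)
The plan is to pass to the universal cover $\C \to E$ and compute the cocycle directly, since on $\C$ every bundle is trivial and the factor of automorphy is exactly the transition of a trivializing frame under the deck transformations $z \mapsto z+\lambda$, $\lambda \in \Lambda$. First I would record that on an elliptic curve $\Omega^1_E \cong \O_E$, with the deck action fixing $dz$, so $\operatorname{Sym}^k \Omega^1_E$ is trivial with trivial factor of automorphy; consequently the principal-parts sequence from the preliminaries reads $0 \to \L_a \to \operatorname{P^k}(\L_a) \to \operatorname{P^{k-1}}(\L_a) \to 0$, and every graded piece of the induced filtration of $\operatorname{P^k}(\L_a)$ is a copy of $\L_a$. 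This already forces the factor of automorphy to be triangular with each diagonal entry equal to $e_a(\lambda,z)$; it remains only to pin down the off-diagonal entries.

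Next I would use the jet description of $\operatorname{P^k}(\L_a)$: over the universal cover a section of $\L_a$ is a holomorphic function $s$ with $s(z+\lambda)=e_a(\lambda,z)s(z)$, and its image in $\operatorname{P^k}(\L_a)$ is the $k$-jet, which in the (unnormalized) derivative frame is the vector $(s^{(k)}, s^{(k-1)}, \ldots, s', s)$, ordered so that the leading term $s^{(k)}$ spans the sub-line-bundle $\L_a$ of the Atiyah sequence. The whole computation then reduces to differentiating the quasi-periodicity relation: applying the Leibniz rule to $\dfrac{d^j}{dz^j}\bigl(e_a(\lambda,z)s(z)\bigr)$ gives
$$
s^{(j)}(z+\lambda) = \sum_{i=0}^{j}\binom{j}{i} e_a^{(i)}(\lambda,z)\, s^{(j-i)}(z),
$$
which is precisely the statement that the derivative frame transforms by a matrix whose entries are binomial coefficients times derivatives of $e_a$. (It is the unnormalized frame, rather than the divided Taylor coefficients $s^{(j)}/j!$, that produces the binomials; the normalized frame would give a Toeplitz matrix with entries $e_a^{(i)}/i!$.)

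Finally I would read off the matrix in the chosen ordering and match it to $f_a(\lambda,z)$. Re-indexing the Leibniz relation in the reversed frame $v_r = s^{(k-r)}$ turns the naive lower-triangular Leibniz matrix into an upper-triangular one whose $(r,c)$ entry is $\binom{k-r}{c-r} e_a^{(c-r)}(\lambda,z)$ for $r \le c$ and $0$ otherwise; a direct check against the displayed matrix (row $0$ gives $\binom{k}{c} e_a^{(c)}$, row $1$ gives $\binom{k-1}{c-1} e_a^{(c-1)}$, and so on) confirms that the two agree. The cocycle identity $f_a(\lambda+\mu,z)=f_a(\lambda,z+\mu)f_a(\mu,z)$ then follows either by functoriality of $\operatorname{P^k}$ applied to the scalar cocycle $e_a(\lambda+\mu,z)=e_a(\lambda,z+\mu)e_a(\mu,z)$, or by differentiating that scalar relation with the Leibniz rule.

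I expect the only genuine obstacle to be bookkeeping: reconciling the two triangular conventions. The Leibniz rule most naturally produces a lower-triangular matrix in the frame $(s, s', \ldots, s^{(k)})$, whereas the stated answer is upper-triangular, so the substance of the argument is to justify the frame ordering forced by the filtration (symbol/leading term first) and to verify that the reversal sends $\binom{j}{i}$ to the coefficients $\binom{k-r}{c-r}$ appearing in the statement. Everything else is the standard identification of $\operatorname{P^k}$ with truncated Taylor jets.
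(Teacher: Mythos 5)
Your proposal is correct and follows essentially the same route as the paper: both expand the quasi-periodicity relation $e_a(\lambda,z)\theta(z)=\theta(z+\lambda)$ to order $k$ (Taylor expansion along the $k$-th infinitesimal neighbourhood of the diagonal in the paper, iterated Leibniz differentiation in yours) and read off the transformation matrix of the jet frame. Your extra bookkeeping about the choice of frame --- unnormalized derivatives in reversed order, which is exactly what produces the binomial coefficients rather than the Toeplitz matrix of divided derivatives $e_a^{(i)}/i!$ --- is a point the paper glosses over, and you handle it correctly.
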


\begin{proof}
Let $\theta(z)$ be a section of $\L_a$, then $e_a(\lambda, z)\theta(z) = \theta(z+\lambda)$. On the product of elliptic curve with itself $E_w \times E_z$ we use coordinates $w$ and $z$ as indicated by subscript. Let $t=z-w$ then equation of the $k$-th infinitesimal neighbourhood of the diagonal is $t^{k+1}=0$. Taylor expending
$$
e_a(\lambda, w+t)\theta(w+t) = \theta(w+t+\lambda)
$$
into Taylor polynomial of order $k$ we get the automorphy factor $f_a(\lambda,z) $ for $\sum_{i=0}^{k} \dfrac{ \theta^{(i)}(w) t^i}{i!}$, which is a section of $\Pk$.
\end{proof}

Now it is easy to find a basis of global sections of $\F_a^{k+1}$. Analytically, section of $\F_a^{k+1}$ is a vector function 
$$
v(z)=\begin{pmatrix} v_0(z)  \\ v_1(z)  \\ \vdots \\ v_k(z) \end{pmatrix},
$$
such that
$$
f_a(\lambda, z) v(z) = v(z+\lambda).
$$

\begin{Cor}The vector bundle $\Pk$ has $3(k+1)$ global sections. A basis of global section can be chosen in the following form
$$
\begin{pmatrix} \theta_i(z+a)  \\ 0  \\ 0 \\ \vdots \\ 0 \end{pmatrix}, 
\begin{pmatrix} \theta'_i(z+a)  \\ \dfrac{1}{k} \theta_i(z+a) \\ 0 \\ \vdots \\ 0 \end{pmatrix}, 
\begin{pmatrix} \theta''_i(z+a)  \\ \dfrac{2}{k} \theta'_i(z+a)  \\ \dfrac{1}{k-1} \theta_i(z+a) \\ \vdots \\ 0 \end{pmatrix}, 
\hdots, 
\begin{pmatrix} \theta_i^{(k)}(z+a)  \\ \theta_i^{(k-1)}(z+a) \\ \theta_i^{(k-2)}(z+a) \\ \vdots \\ \theta_i(z+a) \end{pmatrix},
$$
where $i=0,1,2$.
\end{Cor}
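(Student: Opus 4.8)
The plan is to first pin down $\dim H^0(\Pk)$, then write down $3(k+1)$ explicit global sections and check that they are independent. For the count I would run the principal-parts short exact sequence from the preliminaries. On an elliptic curve the cotangent bundle is trivialized by $dz$, so $\Omega^1_E\cong\O_E$ and $\operatorname{Sym}^k(\Omega^1_E)\cong\O_E$; the sequence becomes
$$
0 \to \L_a \to \Pk \to \operatorname{P^{k-1}}(\L_a) \to 0.
$$
Since $\L_a$ has degree $3>0$ we have $H^1(E,\L_a)=0$ and $h^0(\L_a)=3$, so the long exact cohomology sequence gives $h^0(\Pk)=3+h^0(\operatorname{P^{k-1}}(\L_a))$ together with $H^1(\Pk)=0$; by induction $h^0(\Pk)=3(k+1)$. (Equivalently, this is the value $rd$ forced by the Ulrich presentation of $\F_a^{k+1}$ on $\P^2$.) It therefore suffices to exhibit $3(k+1)$ linearly independent global sections.

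For the sections, fix $i\in\{0,1,2\}$ and put $\theta(z)=\theta_i(z+a)$, a section of $\L_a$, so that $e_a(\lambda,z)\theta(z)=\theta(z+\lambda)$ by the definition of the automorphy factor. Differentiating this relation $m$ times in $z$ and using the Leibniz rule yields
$$
\theta^{(m)}(z+\lambda)=\sum_{j=0}^{m}\binom{m}{j}e_a^{(j)}(\lambda,z)\,\theta^{(m-j)}(z).
$$
For $0\le m\le k$ I would take $v^{(i)}_m(z)$ to be the vector whose $p$-th entry is $\binom{m}{p}\binom{k}{p}^{-1}\theta^{(m-p)}(z)$ for $0\le p\le m$ and $0$ for $p>m$; these are the columns in the statement. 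The claim is $f_a(\lambda,z)\,v^{(i)}_m(z)=v^{(i)}_m(z+\lambda)$, so that each $v^{(i)}_m$ is a genuine section. Reading off row $p$, the coefficient of $e_a^{(q-p)}\theta^{(m-q)}$ on the left is $\binom{k-p}{q-p}\binom{m}{q}\binom{k}{q}^{-1}$ (the entries of $f_a$ are supplied by the previous corollary), while the Leibniz expansion on the right contributes $\binom{m}{p}\binom{k}{p}^{-1}\binom{m-p}{q-p}$; equality of these is precisely the elementary identity $\binom{n}{q}\binom{q}{p}=\binom{n}{p}\binom{n-p}{q-p}$, applied once with $n=k$ and once with $n=m$. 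This verification is the computational heart of the statement, and getting the normalizing constants $\binom{m}{p}\binom{k}{p}^{-1}$ to mesh with the binomial entries of $f_a$ is the step I expect to be most delicate.

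Finally, linear independence. Letting $i$ run over $0,1,2$ produces $3(k+1)$ vectors $v^{(i)}_m$, where $v^{(i)}_m$ is supported in rows $0,\dots,m$ with bottom entry $\binom{k}{m}^{-1}\theta_i(z+a)\neq 0$. Given a vanishing linear combination $\sum_{i,m}\alpha_{i,m}v^{(i)}_m=0$, I would read it off row by row from the bottom: the row-$k$ component involves only $m=k$, giving $\sum_i\alpha_{i,k}\theta_i(z+a)=0$; and once the coefficients with index $>k-j$ are known to vanish, the row-$(k-j)$ component reduces to $\sum_i\alpha_{i,k-j}\,\theta_i(z+a)=0$. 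Since $\theta_0(z+a),\theta_1(z+a),\theta_2(z+a)$ are linearly independent (they form a basis of $H^0(\L_a)$), at each stage all $\alpha_{i,k-j}$ vanish, so descending induction kills every coefficient. Hence the $3(k+1)$ sections are independent, and as their number equals $\dim H^0(\Pk)$ they form a basis, completing the proof.
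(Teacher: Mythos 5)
Your proof is correct, and its core is the same as the paper's: the paper's own proof merely asserts that the displayed vectors satisfy the automorphy condition $f_a(\lambda,z)v(z)=v(z+\lambda)$ and are ``clearly'' linearly independent, which is exactly what you verify in detail. What you add is the substance the paper leaves implicit: the count $h^0(\Pk)=3(k+1)$ via the principal-parts sequence and the triviality of $\Omega^1_E$ (the paper gets this number from the Ulrich property, i.e.\ $h^0=\deg=3(k+1)$), the Leibniz expansion of $\theta^{(m)}(z+\lambda)$, and the reduction of the row-by-row check to the identity $\binom{n}{q}\binom{q}{p}=\binom{n}{p}\binom{n-p}{q-p}$ used once with $n=k$ and once with $n=m$. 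Your triangular-support argument for independence is also fine.

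One point deserves emphasis: your normalization $\binom{m}{p}\binom{k}{p}^{-1}\theta^{(m-p)}$ does \emph{not} literally reproduce the third displayed column of the statement, and you should not claim that it does. For $m=p=2$ your coefficient is $\binom{2}{2}\binom{k}{2}^{-1}=\tfrac{2}{k(k-1)}$, whereas the statement prints $\tfrac{1}{k-1}$. Your version is the correct one: with the printed coefficient, the top entry of $f_a v$ comes out to $e_a\theta''+2e'_a\theta'+\tfrac{k}{2}e''_a\theta$, which equals $\theta''(z+\lambda)=e_a\theta''+2e'_a\theta'+e''_a\theta$ only when $k=2$; so the printed vector is not a global section for $k\geq 3$. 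The printed $\tfrac{1}{k-1}$ is evidently a slip --- it is the \emph{ratio} $(m-p)/(k-p)$ between consecutive coefficients at $m=2$, $p=1$, rather than the cumulative coefficient --- and your formula, which agrees with the columns $m=0,1$ and $m=k$, is the unique consistent interpolation. So your proof in fact establishes a corrected form of the corollary; state that correction explicitly rather than asserting your vectors ``are the columns in the statement.''
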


\begin{proof}
It is easy to check by direct computation that all this analytic vector functions satisfy automorphy condition with automorphy factor $f_a(\lambda,z)$ obtained above. They are clearly linearly independent and there $3(k+1)$ of them: $k+1$ vectors and $i=0,1,2$.
\end{proof}

\textbf{Example: rank two case.}
For $k=1$ the automorphy factor of $\F_a^2$ is 
$$
f_a(\lambda,z) = \begin{pmatrix} e_a(\lambda, z) & \dfrac{d e_a(\lambda ,z)}{dz} \\ 0 & e_a(\lambda, z) \end{pmatrix}.
$$

The basis of global sections of $\F_a^2$ is given by vector functions
$$
v(z)=\begin{pmatrix} v_1(z)  \\ v_2(z) \end{pmatrix},
$$
such that
$$
f_a(\lambda, z) v(z) = v(z+\lambda).
$$

Dimension $\dim H^0(E, \F_a^2) = \deg(\F_a^2)=6$. Six linearly independent global sections of $\F_a^2$ are 
$$
\begin{pmatrix} \dfrac{d \theta_{i}(z)}{dz} \\ \theta_{i}(z) \end{pmatrix}, \qquad \begin{pmatrix} \theta_{i}(z) \\ 0 \end{pmatrix}, 
$$
for $i=0,1,2$.

\section{Martices of Ulrich bundle and derivatives of theta functions} 

The goal of this section is to find matrix factorizations of Ulrich bundles. As Ulrich bundles are principal sheaves bundles of an Ulrich line bundle it come with no surprise that matrix factorizations involve derivatives of the theta functions. First, we carry the computation for rank two Ulrich bundle, where some of the steps a much clearer. After this we generalize the formulas to any rank.

For a line Ulrich bundle $\L_a$ we have an epimorphism induces by the evaluation map on the zeroth cohomology
$$
H^0(E, \L_a) \otimes H^0(E, \O(1)) \to H^0(E, \L_a(1)) \to 0.
$$
Three dimensional kernel of this map is embedded into $H^0(E, \L_a) \otimes H^0(E, \O(1))$ by the Moore matrix $M$.

We repeat this construction for the bundle $\F_a^2$ of rank two. The evaluation map again induce epimorpism on zeroth cohomology
$$
H^0(E, \F_a^2) \otimes H^0(E, \O(1)) \to H^0(E, \F_a^2(1)) \to 0.
$$
By computing dimensions
$$
\dim H^0(E, \F_a^2) \otimes H^0(E, \O(1)) = \dim H^0(E, \F_a) \dim H^0(E, \O(1)) = 18,
$$
and 
$$
\dim H^0(E, \F_a^2(1)) = \deg \F_a^2(1) = 12.
$$
we see that this map has a six dimensional kernel, or, equivalently, there are six relations between sections of $\F_a^2$ and sections of $\O(1)$. We get this new six relations by differentiating relations given by Moore matrix
$$
M_{a,x} \begin{pmatrix} \theta_0(z+a) \\ \theta_1(z+a) \\ \theta_2(z+a) \end{pmatrix} =0,
$$
by parameter $a$. We get
$$
M_{a,x} \begin{pmatrix} \theta_0'(z+a) \\ \theta_1'(z+a) \\ \theta_2'(z+a) \end{pmatrix} + M'_{a,x} \begin{pmatrix} \theta_0(z+a) \\ \theta_1(z+a) \\ \theta_2(z+a) \end{pmatrix} =0,
$$
where 
$$
M'_{a,x} = \frac{\partial}{\partial a} M_{a,x} = \begin{pmatrix}
\theta'_0(a)\theta_0(z) & \theta'_2(a)\theta_2(z) & \theta'_1(a)\theta_1(z) \\
\theta'_2(a)\theta_1(z) & \theta'_1(a)\theta_0(z) & \theta'_0(a)\theta_2(z) \\
\theta'_1(a)\theta_2(z) & \theta'_0(a)\theta_1(z) & \theta'_2(a)\theta_0(z)
\end{pmatrix}.
$$
We can also write this identities as
\begin{multline*}
\theta_0(a)\theta_0(z)\theta'_0(z+a)+\theta_2(a)\theta_2(z)\theta'_1(z+a)+\theta_1(a)\theta_1(z)\theta'_2(z+a)+\\
\shoveright{\theta'_0(a)\theta_0(z)\theta_0(z+a)+\theta'_2(a)\theta_2(z)\theta_1(z+a)+\theta'_1(a)\theta_1(z)\theta_2(z+a) = 0,}\\
\shoveleft{\theta_2(a)\theta_1(z)\theta'_0(z+a)+\theta_1(a)\theta_0(z)\theta'_1(z+a)+\theta_0(a)\theta_2(z)\theta'_2(z+a)+}\\
\shoveright{\theta'_2(a)\theta_1(z)\theta_0(z+a)+\theta'_1(a)\theta_0(z)\theta_1(z+a)+\theta'_0(a)\theta_2(z)\theta_2(z+a) = 0,}\\
\shoveleft{\theta_1(a)\theta_2(z)\theta'_0(z+a)+\theta_0(a)\theta_1(z)\theta'_1(z+a)+\theta_2(a)\theta_0(z)\theta'_2(z+a)+}\\
\theta'_1(a)\theta_2(z)\theta_0(z+a)+\theta'_0(a)\theta_1(z)\theta_1(z+a)+\theta'_2(a)\theta_0(z)\theta_2(z+a) = 0.
\end{multline*}

\begin{Prop}
The kernel of the map 
$$
H^0(E, \F_a^2) \otimes H^0(E, \O(1)) \to H^0(E, \F_a^2(1)) \to 0.
$$
is generated by the following $6$ linearly independent linear relations
$$
\begin{pmatrix}
M_{a,x} & M'_{a,x} \\
0 & M_{a,x}
\end{pmatrix} \otimes \operatorname{id}_2 
\begin{pmatrix}
\left(
\begin{gathered}
\theta'_0(z+a)\\
\theta_0(z+a)
\end{gathered}
\right)\\
\left(
\begin{gathered}
\theta'_1(z+a)\\
\theta_1(z+a)
\end{gathered}
\right)\\
\left(
\begin{gathered}
\theta'_2(z+a)\\
\theta_2(z+a)
\end{gathered}
\right)\\
\left(
\begin{gathered}
\theta_0(z+a)\\
0
\end{gathered}
\right)\\
\left(
\begin{gathered}
\theta_1(z+a)\\
0
\end{gathered}
\right)\\
\left(
\begin{gathered}
\theta_2(z+a)\\
0
\end{gathered}
\right)\\
\end{pmatrix} = 0.
$$

where 
$$
\begin{pmatrix}
M_{a,x} & M'_{a,x} \\
0 & M_{a,x}
\end{pmatrix}=\begin{pmatrix}
\theta_0(a)\theta_0(z) & \theta_2(a)\theta_2(z) & \theta_1(a)\theta_1(z) & \theta'_0(a)\theta_0(z) & \theta'_2(a)\theta_2(z) & \theta'_1(a)\theta_1(z) \\
\theta_2(a)\theta_1(z) & \theta_1(a)\theta_0(z) & \theta_0(a)\theta_2(z) & \theta'_2(a)\theta_1(z) & \theta'_1(a)\theta_0(z) & \theta'_0(a)\theta_2(z) \\
\theta_1(a)\theta_2(z) & \theta_0(a)\theta_1(z) & \theta_2(a)\theta_0(z) & \theta'_1(a)\theta_2(z) & \theta'_0(a)\theta_1(z) & \theta'_2(a)\theta_0(z) \\
0 & 0 & 0 & \theta_0(a)\theta_0(z) & \theta_2(a)\theta_2(z) & \theta_1(a)\theta_1(z) \\
0 & 0 & 0 & \theta_2(a)\theta_1(z) & \theta_1(a)\theta_0(z) & \theta_0(a)\theta_2(z) \\
0 & 0 & 0 & \theta_1(a)\theta_2(z) & \theta_0(a)\theta_1(z) & \theta_2(a)\theta_0(z)
\end{pmatrix}.
$$
\end{Prop}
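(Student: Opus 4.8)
The plan is to leverage the dimension count already carried out and reduce the statement to two checks: that each of the six rows of the displayed block matrix yields a genuine element of the kernel, and that these six elements are linearly independent. Since the evaluation map is surjective (the ``$\to 0$'' recorded above) and its source and target have dimensions $18$ and $12$, the kernel has dimension $18-12=6$; hence producing six independent kernel elements proves that they generate it. Throughout I abbreviate the basis sections of $\F_a^2$ as $s_i^{(1)} = (\theta_i'(z+a),\theta_i(z+a))^T$ and $s_i^{(0)} = (\theta_i(z+a),0)^T$ for $i=0,1,2$, which occupy the top and bottom halves of the column, respectively.

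First I would verify that each row lands in the kernel. Pairing row $i$ of $\begin{pmatrix} M_{a,x} & M'_{a,x} \\ 0 & M_{a,x}\end{pmatrix}$ against the column produces the element $\sum_j A_{ij}\otimes s_j \in H^0(E,\F_a^2)\otimes H^0(E,\O(1))$, whose image under evaluation is the two-component section $\sum_j A_{ij}\,s_j$ of $\F_a^2(1)$. Reading off the two components of the top three rows, the lower component is exactly the original Moore identity $M_{a,x}(\theta_i(z+a))_i=0$, while the upper component is precisely the differentiated identity $M_{a,x}(\theta_i'(z+a))_i + M'_{a,x}(\theta_i(z+a))_i = 0$ displayed just before the statement. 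For the bottom three rows the lower component vanishes trivially, since the relevant sections $s_i^{(0)}$ have zero second entry, and the upper component is again the original Moore identity. Thus all six rows evaluate to the zero section, so they lie in the kernel.

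Next I would prove linear independence of the resulting six elements $R_1,\dots,R_6$. Suppose $\sum_i \lambda_i R_i = 0$ with $\lambda_i\in\C$. The sections $s_i^{(1)}$ occur only in the top three rows, through the upper-left block $M_{a,x}$; collecting the terms involving the $s^{(1)}$ generators forces $\sum_{i=1}^{3}\lambda_i(\text{$i$-th row of }M_{a,x})=0$. Because $\det M_{a,x}$ is a nonzero multiple of the Hesse cubic, $M_{a,x}$ is invertible over $\C(x_0,x_1,x_2)$ for $a\notin E[3]$, so its rows are linearly independent and $\lambda_1=\lambda_2=\lambda_3=0$. The residual relation $\sum_{i=4}^{6}\lambda_i R_i=0$ now involves only the generators $s_i^{(0)}$ through the lower-right copy of $M_{a,x}$, and the same invertibility gives $\lambda_4=\lambda_5=\lambda_6=0$.

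With six linearly independent elements inside a six-dimensional kernel, I conclude they form a basis and hence generate it, which is the assertion. The verification that the rows lie in the kernel is a direct rereading of the original and differentiated theta identities, so the step demanding genuine care is the bookkeeping in the independence argument: one must track which generators each block of the matrix touches and invoke invertibility of the Moore matrix separately for the two diagonal blocks.
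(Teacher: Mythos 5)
Your proof is correct and follows essentially the same route as the paper: the dimension count $18-12=6$ reduces the claim to exhibiting six independent kernel elements, and membership in the kernel is checked component by component, reducing exactly to the Moore identity $M_{a,x}(\theta_i(z+a))_i=0$ and its $a$-differentiated version. The only difference is that you supply an actual argument for linear independence (collecting coefficients of the basis sections and invoking $\det M_{a,x}\neq 0$ for $a\notin E[3]$), a point the paper dismisses with ``linear independence is clear.''
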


\begin{proof}
We need to check $12$ identities: $6$ for first component of sections of $\F_a^2$ and $6$ for second component of sections of $\F_a^2$. But all identities that we need to verify are exactly
$$
M_{a,x} \begin{pmatrix} \theta_0(z+a) \\ \theta_1(z+a) \\ \theta_2(z+a) \end{pmatrix} =0,
$$
and
$$
M_{a,x} \begin{pmatrix} \theta_0'(z+a) \\ \theta_1'(z+a) \\ \theta_2'(z+a) \end{pmatrix} + M' _{a,x}\begin{pmatrix} \theta_0(z+a) \\ \theta_1(z+a) \\ \theta_2(z+a) \end{pmatrix} =0.
$$
Linear independence is clear.
\end{proof}

We immediately get
\begin{Cor}
Ulrich vector bundle $\F_a^2$ is the cokernel of the $6 \times 6$ block matrix
$$
A = \begin{pmatrix}
M_{a,x} & M'_{a,x} \\
0 & M_{a,x}
\end{pmatrix},
$$
where $M_{a,x}$ is the Moore matrix, and $M'_{a,x}=\frac{\partial}{\partial a} M_{a,x}$.
\end{Cor}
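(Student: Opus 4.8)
The plan is to upgrade the relation count of the previous Proposition to an isomorphism of sheaves on $\P^2$ by a Hilbert polynomial argument. Since $\F_a^2 = \operatorname{P^1}(\L_a)$ is Ulrich it is globally generated with $H^0(E, \F_a^2) \cong \C^6$, so the six basis sections produced in the earlier corollary assemble into a surjection $\ev \colon \O_{\P^2}^6 \twoheadrightarrow \F_a^2$. The Proposition shows that each column of $A$ is a linear relation among these six sections, i.e. the composite $\O_{\P^2}(-1)^6 \xrightarrow{A} \O_{\P^2}^6 \xrightarrow{\ev} \F_a^2$ vanishes. Hence $\ev$ factors through an induced surjection $\coker(A) \twoheadrightarrow \F_a^2$, and the whole content of the corollary is that this map is an isomorphism.

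First I would check that $A$ is injective as a morphism of sheaves. Because $A$ is block upper triangular, $\det A = (\det M_{a,x})^2$, and $\det M_{a,x}$ is a nonzero multiple of the Hesse cubic $w$ defining $E$ (this is the determinantal presentation of the cubic recorded in the introduction), which is nonzero precisely because $a \notin E[3]$. Thus $\det A = c\, w^2 \neq 0$ as a section of $\O_{\P^2}(6)$, so $A$ has generic rank $6$ and is injective, yielding the short exact sequence
$$
0 \to \O_{\P^2}(-1)^6 \xrightarrow{A} \O_{\P^2}^6 \to \coker(A) \to 0.
$$
Next I would compare Hilbert polynomials. From this sequence $\chi(\coker(A)(m)) = 6\binom{m+2}{2} - 6\binom{m+1}{2} = 6(m+1)$. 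On the other side $\F_a^2$ is supported on the genus-one curve $E$ with $\rk \F_a^2 = 2$ and $\deg \F_a^2 = 6$, while $\O_E(1)=\O(3o)$ has degree $3$, so Riemann--Roch gives $\chi(\F_a^2(m)) = 6 + 6m = 6(m+1)$. The two Hilbert polynomials agree, so the surjection $\coker(A) \twoheadrightarrow \F_a^2$ has kernel with identically vanishing Euler characteristic and is therefore an isomorphism; the displayed sequence is then the asserted Ulrich presentation.

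The main obstacle is the determinant computation $\det A = (\det M_{a,x})^2$ together with the identification $\det M_{a,x} \sim w$: this is what certifies left-exactness and shows that $\coker(A)$ is supported on $E$ with the correct multiplicity two. Once injectivity is secured the remainder is bookkeeping, since the Proposition has already exhibited the relations and the Ulrich property of $\operatorname{P^1}(\L_a)$ supplies the surjection; the Hilbert polynomial match then rules out any residual torsion and forces the isomorphism.
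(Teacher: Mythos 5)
Your proof is correct, but it follows a genuinely different route from the paper's. The paper gives essentially no argument: the Corollary is presented as immediate from the preceding Proposition, the implicit reasoning being structural --- since $\F_a^2$ is Ulrich it admits by definition a linear presentation $0 \to \O_{\P^2}(-1)^6 \to \O_{\P^2}^6 \to \F_a^2 \to 0$, and the presentation matrix of such a bundle is exactly a basis of the linear relations among its six generating global sections, which the Proposition has just identified with $A$. You instead prove the isomorphism $\coker(A) \cong \F_a^2$ by hand: the relations give $\ev \circ A = 0$ and hence a surjection $\coker(A) \twoheadrightarrow \F_a^2$; the determinant computation $\det A = (\det M_{a,x})^2 \doteq w^2 \neq 0$ (valid exactly because $a \notin E[3]$) gives injectivity of $A$ and hence left-exactness; and the matching Hilbert polynomials $6\binom{m+2}{2} - 6\binom{m+1}{2} = 6(m+1) = \chi(\F_a^2(m))$ force the kernel of the surjection to have identically zero Hilbert polynomial, hence to vanish. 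Your route is longer but more self-contained: it never assumes that the degree-one relations generate the whole syzygy module of $\F_a^2$ (equivalently, that the minimal resolution is linear) --- that fact falls out as a byproduct --- whereas the paper's shortcut imports it from the structure theory of Ulrich modules; your argument also makes the left-exactness explicit rather than building it into the definition. One convention you share with the paper rather than a gap: the Proposition exhibits the \emph{rows} of $A$ as relations (the matrix multiplies the column of sections), while vanishing of $\ev \circ A$ requires the \emph{columns} to be relations, so strictly one presents $\F_a^2$ as $\coker(A^{T})$; since $M_{a,x}^{T}$ agrees with $M_{-a,x}$ up to row and column permutations, this discrepancy only exchanges $a$ with $-a$ and does not affect the shape of the statement.
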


Now we are in position to obtain matrix factorization of $\F_a^2$. Let us denote second matrix in the matrix factorization of $\L_a$ by $L_{a,x}$. So, ordered pair $(M_{a,x}, L_{a,x})$ is the matrix factorization of $\L_a$. Recall that 
$$
L_{a,x}=\frac{1}{a_0 a_1 a_2} \begin{pmatrix}
a_1a_2x_0^2 -a_0^2x_1x_2 & a_0a_1x_1^2-a_2^2x_0x_2 & a_0a_2x_2^2-a_1^2x_0x_1 \\
a_0a_1x_2^2-a_2^2x_0x_1 & a_0a_2x_0^2-a_1^2x_1x_2 & a_1a_2x_1^2 -a_0^2x_0x_2 \\
a_0a_2x_1^2-a_1^2x_0x_2 & a_1a_2x_2^2 -a_0^2x_0x_1 & a_0a_1x_0^2-a_2^2x_1x_2
\end{pmatrix}.
$$
\begin{Cor}
The matrix factorization of $\F_a^2$ is 
$$
A_2 = \begin{pmatrix}
M_{a,x} & M'_{a,x} \\
0 & M_{a,x}
\end{pmatrix}, \qquad B_2 = \begin{pmatrix} L _{a,x}& L'_{a,x} \\ 0 & L_{a,x} \end{pmatrix},
$$
where $L$ is as above but rewritten in terms of theta functions 
%\begin{gather*}
%{\theta_0(a) \theta_1(a) \theta_2(a)} L=\\
%\begin{pmatrix}
%\theta_1(a)\theta_2(a)\theta_0(z)^2 -\theta_0(a)^2\theta_1(z)\theta_2(z) & \theta_0(a)\theta_1(a)\theta_1(z)^2-\theta_2(a)^2\theta_0(z)\theta_2(z) & \theta_0(a)\theta_2(a)\theta_2(z)^2-\theta_1(a)^2\theta_0(z)\theta_1(z) \\
%\theta_0(a)\theta_1(a)\theta_2(z)^2-\theta_2(a)^2\theta_0(z)\theta_1(z) & \theta_0(a)\theta_2(a)\theta_0(z)^2-\theta_1(a)^2\theta_1(z)\theta_2(z) & \theta_1(a)\theta_2(a)\theta_1(z)^2 -\theta_0(a)^2\theta_0(z)\theta_2(z) \\
%\theta_0(a)\theta_2(a)\theta_1(z)^2-\theta_1(a)^2\theta_0(z)\theta_2(z) & \theta_1(a)\theta_2(a)\theta_2(z)^2 -\theta_0(a)^2\theta_0(z)\theta_1(z) & \theta_0(a)\theta_1(a)\theta_0(z)^2-\theta_2(a)^2\theta_1(z)\theta_2(z)
%\end{pmatrix},
%\end{gather*}
and $L' = \frac{\partial}{\partial a} L$.
\end{Cor}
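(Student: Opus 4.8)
The plan is to verify directly that the ordered pair $(A_2, B_2)$ is a matrix factorization of the Hesse cubic $w = x_0^3 + x_1^3 + x_2^3 - 3\psi x_0 x_1 x_2$, that is, to check that $A_2 B_2 = B_2 A_2 = w \cdot I_6$. The argument reduces to block-matrix multiplication together with a single differentiation, so I expect it to be short.

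First I would record what is already available for the rank-one bundle: by hypothesis the pair $(M_{a,x}, L_{a,x})$ is a matrix factorization of $\L_a$, so
$$
M_{a,x} L_{a,x} = L_{a,x} M_{a,x} = w \cdot I_3.
$$
The observation on which everything hinges is that the right-hand side $w$ is a fixed polynomial in the coordinates $x$ and the fixed parameter $\psi$, and carries no dependence on the point $a$. Concretely this is because $\det M_{a,x} = a_0 a_1 a_2 \, w$ precisely when $a$ satisfies $a_0^3 + a_1^3 + a_2^3 = 3\psi a_0 a_1 a_2$, i.e. exactly when $a$ is a point of the Hesse cubic $E$; since $a \in E$ this holds identically in $a$.

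Next I would expand the upper-triangular block product. Abbreviating $M = M_{a,x}$, $L = L_{a,x}$ and writing primes for $\partial/\partial a$,
$$
A_2 B_2 = \begin{pmatrix} M & M' \\ 0 & M \end{pmatrix} \begin{pmatrix} L & L' \\ 0 & L \end{pmatrix} = \begin{pmatrix} ML & ML' + M'L \\ 0 & ML \end{pmatrix}.
$$
The two diagonal blocks are $ML = w I_3$ by the rank-one identity, and the bottom-left block is manifestly zero. It remains only to show the upper-right block vanishes, and this is where the key observation pays off: differentiating the identity $ML = w I_3$ in $a$ and using that $w$ is $a$-independent gives
$$
M'L + ML' = \frac{\partial}{\partial a}\bigl(w I_3\bigr) = 0.
$$
Hence $A_2 B_2 = w I_6$.

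Finally I would run the identical computation for $B_2 A_2$: its diagonal blocks are $LM = w I_3$ and its off-diagonal block $LM' + L'M$ is again the $a$-derivative of $LM = w I_3$, hence zero, so $B_2 A_2 = w I_6$ as well. There is no genuinely hard step; the only point requiring care is the recognition that the vanishing of the off-diagonal block is not a new theta-function identity to be proved by hand, but simply the derivative of the rank-one relation, legitimate precisely because $a$ ranges over the curve $E$ and $w$ is independent of $a$.
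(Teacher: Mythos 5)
Your proposal is correct and follows essentially the same route as the paper: the paper's proof also consists of differentiating the rank-one identity $M_{a,x}L_{a,x}=w\cdot I_3$ with respect to $a$ and noting $\frac{\partial}{\partial a}w=0$, so the off-diagonal block $M'_{a,x}L_{a,x}+M_{a,x}L'_{a,x}$ vanishes. Your additional remarks (the explicit block multiplication, the check of $B_2A_2$, and the justification that $w$ carries no $a$-dependence) only flesh out details the paper leaves implicit.
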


\begin{proof}
We only need to compute upper right block of the matrix $B$. This is done differentiating identity $M_{a,x} L_{a,x}=w$ by parameter $a$
$$
M' _{a,x}L_{a,x} +  M_{a,x} L'_{a,x} = \frac{\partial}{\partial a} w = 0.
$$
\end{proof}

Now we generalize this result to an Ulrich bundle of any rank. 

\begin{Thm}
Let $\F_a^{k+1} = \Pk$ be an Ulrich bundle of rank $r=k+1$, then matrix factorization of $\F_a^{k+1}$ is
$$
A _{k+1}= \begin{pmatrix} 
M_{a,x} &  {k \choose 1} M_{a,x}' & {k \choose 2} M_{a,x}'' & \ldots & {k \choose {k-1}} M_{a,x}^{(k-1)} & M_{a,x}^{(k)} \\
0 & M_{a,x} & { {k-1} \choose 1} M_{a,x}' & \ldots & {{k-1} \choose {k-2}} M_{a,x}^{(k-2)} & M_{a,x}^{(k-1)} \\
0 & 0 & M_{a,x} & \vdots & {{k-2} \choose {k-3}} M_{a,x}^{(k-3)} & M_{a,x}^{(k-2)} \\
\vdots & \vdots & \vdots & \ddots & \vdots & \vdots \\
0 & 0 & 0 & \ldots & M_{a,x} & M_{a,x}' \\ 
0 & 0 & 0 & \ldots & 0 & M_{a,x}
\end{pmatrix}
$$
and
$$
B_{k+1} = \begin{pmatrix} 
L_{a,x} &  {k \choose 1} L_{a,x}' & {k \choose 2} L_{a,x}'' & \ldots & {k \choose {k-1}} L_{a,x}^{(k-1)} & L_{a,x}^{(k)} \\
0 & L_{a,x} & { {k-1} \choose 1} L_{a,x}' & \ldots & {{k-1} \choose {k-2}} L_{a,x}^{(k-2)} & L_{a,x}^{(k-1)} \\
0 & 0 & L_{a,x} & \vdots & {{k-2} \choose {k-3}} L_{a,x}^{(k-3)} & L_{a,x}^{(k-2)} \\
\vdots & \vdots & \vdots & \ddots & \vdots & \vdots \\
0 & 0 & 0 & \ldots & L_{a,x} & L_{a,x}' \\ 
0 & 0 & 0 & \ldots & 0 & L_{a,x}
\end{pmatrix}.
$$
In particular, Ulrich bundle $\F_a^{k+1}$ is a cokernel of $A_{k+1}$
$$
\xymatrix{
0 \ar[r] & \O_{\P^2}^{3(k+1)} (-1) \ar[r]^{A_{k+1}} & \O_{\P^2}^{3(k+1)} \ar[r] & \F_a^{k+1} \ar[r] & 0.
}
$$
\end{Thm}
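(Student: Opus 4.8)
The plan is to prove two things simultaneously: that the pair $(A_{k+1}, B_{k+1})$ is a matrix factorization of the Hesse cubic $w$, i.e. $A_{k+1}B_{k+1} = B_{k+1}A_{k+1} = w\cdot I_{3(k+1)}$, and that the cokernel of $A_{k+1}$ is exactly $\F_a^{k+1}$. The entire argument rests on the single mechanism already used in the rank two case: differentiation with respect to the parameter $a$ together with the Leibniz rule, which manufactures precisely the binomial coefficients appearing in $A_{k+1}$ and $B_{k+1}$. To organize it I would index blocks by $0 \le p,q \le k$, so that both matrices have $(p,q)$-block equal to $\binom{k-p}{q-p} M_{a,x}^{(q-p)}$ (respectively $\binom{k-p}{q-p} L_{a,x}^{(q-p)}$) for $q\ge p$ and $0$ otherwise, where $M_{a,x}^{(j)} = \partial^j M_{a,x}/\partial a^j$.

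\textbf{Matrix factorization.} To check $A_{k+1}B_{k+1} = w\,I$ I would compute the $(p,r)$-block of the product,
\[
\sum_{q=p}^{r} \binom{k-p}{q-p}\binom{k-q}{r-q}\, M_{a,x}^{(q-p)} L_{a,x}^{(r-q)}.
\]
Setting $m=r-p$ and $i=q-p$ and using the Vandermonde-type identity $\binom{k-p}{i}\binom{k-p-i}{m-i} = \binom{k-p}{m}\binom{m}{i}$, this equals $\binom{k-p}{m}\sum_{i=0}^{m}\binom{m}{i} M_{a,x}^{(i)} L_{a,x}^{(m-i)} = \binom{k-p}{m}\,\frac{d^m}{da^m}\!\left(M_{a,x}L_{a,x}\right)$. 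Since $M_{a,x}L_{a,x}=w\,I_3$ by the rank one factorization and $w$ is independent of $a$, the bracket is $w\,I_3$ when $m=0$ and vanishes for $m\ge 1$; hence the product is $w\,I_{3(k+1)}$. The identity $B_{k+1}A_{k+1}=w\,I$ follows identically from $L_{a,x}M_{a,x}=w\,I_3$.

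\textbf{Cokernel.} Here I would mirror the rank two Proposition and its Corollary. Since $\F_a^{k+1}$ is Ulrich (established above via $\Pk$), it admits a presentation $0\to\O_{\P^2}(-1)^{3(k+1)}\to\O_{\P^2}^{3(k+1)}\to\F_a^{k+1}\to 0$ whose matrix of linear forms is exactly the space of linear syzygies among the generating sections, i.e. the kernel of the evaluation map $H^0(E,\F_a^{k+1})\otimes H^0(E,\O(1)) \to H^0(E,\F_a^{k+1}(1))$. By Riemann--Roch on $E$ this is a surjection of a $9(k+1)$-dimensional space onto a $6(k+1)$-dimensional one, so the kernel has dimension $3(k+1)$. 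Differentiating the fundamental Moore relation $M_{a,x}\,(\theta_0(z+a),\theta_1(z+a),\theta_2(z+a))^{\mathsf T}=0$ a total of $j$ times in $a$ and applying Leibniz gives $\sum_{i=0}^{j}\binom{j}{i} M_{a,x}^{(i)}\,\theta^{(j-i)}(z+a)=0$ for $j=0,\dots,k$; read against the explicit basis of sections from the Corollary, these produce exactly the columns of $A_{k+1}$ as kernel elements. Their triangular leading-term structure makes them linearly independent, so by the dimension count they span the kernel, whence $A_{k+1}$ is the Ulrich matrix of $\F_a^{k+1}$ and $\coker A_{k+1}\cong \F_a^{k+1}$.

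\textbf{Main obstacle.} The computation splits cleanly, but the step demanding the most care is the bookkeeping that matches the differentiated Moore relations to the precise column/block pattern of $A_{k+1}$ under the chosen ordering of the basis of $H^0(E,\F_a^{k+1})$: one must confirm that the binomial weights emitted by the Leibniz rule coincide with those written in the matrix, and that the $3(k+1)$ relations are neither over- nor under-counted. Once this index matching and the Vandermonde reduction in the factorization step are verified, the remaining assertions (surjectivity of the evaluation map, linear independence, and the dimension count) are routine and parallel the rank two argument already given.
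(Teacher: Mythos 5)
Your proposal is correct, and its second half (the cokernel identification) is essentially the paper's own proof: the same evaluation map $H^0(E,\F_a^{k+1})\otimes H^0(E,\O(1)) \to H^0(E,\F_a^{k+1}(1))$, the same dimension count giving a $3(k+1)$-dimensional kernel, and the same mechanism of applying $d^j/da^j$ to the Moore relation $M_{a,x}\theta=0$ so that the Leibniz rule emits exactly the binomial weights of the block rows of $A_{k+1}$, with linear independence read off the triangular structure. (Your phrase ``columns of $A_{k+1}$'' versus the paper's rows is only a transposition convention; each block row $p$ paired with the vector $(\theta^{(k)},\dots,\theta',\theta)^{\mathsf T}$ is the $(k-p)$-th derivative of $M_{a,x}\theta=0$, which is what the paper writes.)

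Where you genuinely depart from the paper is the first half. The paper's proof of this theorem never addresses $B_{k+1}$ at all: it only identifies $A_{k+1}$ via linear syzygies, and the factorization identity is verified only in the rank-two corollary, by differentiating $M_{a,x}L_{a,x}=w$ once. Your block computation
$$
\sum_{i=0}^{m}\binom{k-p}{i}\binom{k-p-i}{m-i}\,M_{a,x}^{(i)}L_{a,x}^{(m-i)}
=\binom{k-p}{m}\,\frac{d^m}{da^m}\bigl(M_{a,x}L_{a,x}\bigr)
=\begin{cases} w\,I_3, & m=0,\\ 0, & m\ge 1,\end{cases}
$$
using the trinomial-revision identity and the fact that $w$ is independent of $a$, is the correct general-$k$ statement of that rank-two argument, and it is the only place in either treatment where the claim $A_{k+1}B_{k+1}=B_{k+1}A_{k+1}=w\,I_{3(k+1)}$ is actually proved. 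So your write-up buys completeness: it closes a step the paper leaves implicit. What the paper's route buys instead is economy; by proving only the syzygy statement it gets the presentation of $\F_a^{k+1}$ without ever needing the second matrix, deferring $B_{k+1}$ to the pattern established for $k=1$. Both halves of your argument are sound, and the ``main obstacle'' you flag (index bookkeeping between the Leibniz weights and the block pattern) is exactly the check the paper performs by displaying $A_{k+1}$ acting on the vector of derivatives of $\theta$.
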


\begin{proof}
As before the evaluation map induce an epimorpism on zeroth cohomology
$$
H^0(E, \F_a^{k+1}) \otimes H^0(E, \O(1)) \to H^0(E, \F_a^{k+1}(1)) \to 0.
$$
Dimension of the kernel of this map is $3(k+1)$, so we need to find $3(k+1)$ linear relations among global sections of $\F_a^{k+1}$.

Let $\theta$ denote vector of theta functions
$$
\theta = \begin{pmatrix}  \theta_0(z+a) \\ \theta_1(z+a) \\ \theta_2(z+a) \end{pmatrix}.
$$
Identity for theta functions could now written as
$$
M_{a,x} \theta = 0. 
$$
First, differentiating this identity $k$ times we obtain $k+1$ relations for global sections of $\F_a^{k+1}$ given by the matrix $A_{k+1}$ that is
$$
\begin{pmatrix} 
M_{a,x} &  {k \choose 1} M_{a,x}' & {k \choose 2} M_{a,x}'' & \ldots & {k \choose {k-1}} M_{a,x}^{(k-1)} & M_{a,x}^{(k)} \\
0 & M_{a,x} & { {k-1} \choose 1} M_{a,x}' & \ldots & {{k-1} \choose {k-2}} M_{a,x}^{(k-2)} & M_{a,x}^{(k-1)} \\
0 & 0 & M_{a,x} & \vdots & {{k-2} \choose {k-3}} M_{a,x}^{(k-3)} & M_{a,x}^{(k-2)} \\
\vdots & \vdots & \vdots & \ddots & \vdots & \vdots \\
0 & 0 & 0 & \ldots & M_{a,x} & M_{a,x}' \\ 
0 & 0 & 0 & \ldots & 0 & M_{a,x}
\end{pmatrix}
\begin{pmatrix}
\theta^{k} \\ \theta^{(k-1)} \\ \theta^{(k-2)} \\ \vdots \\ \theta' \\ \theta 
\end{pmatrix}
=0.
$$
Second, by the same computation as for $k=1$ case the basis of global sections
$$
\begin{pmatrix} \theta_i(z+a)  \\ 0  \\ 0 \\ \vdots \\ 0 \end{pmatrix}, 
\begin{pmatrix} \theta'_i(z+a)  \\ \dfrac{1}{k} \theta_i(z+a) \\ 0 \\ \vdots \\ 0 \end{pmatrix}, 
\begin{pmatrix} \theta''_i(z+a)  \\ \dfrac{2}{k} \theta'_i(z+a)  \\ \dfrac{1}{k-1} \theta_i(z+a) \\ \vdots \\ 0 \end{pmatrix}, 
\hdots, 
\begin{pmatrix} \theta_i^{(k)}(z+a)  \\ \theta_i^{(k-1)}(z+a) \\ \theta_i^{(k-2)}(z+a) \\ \vdots \\ \theta_i(z+a) \end{pmatrix},
$$
(where $i=0,1,2$) is in the kernel of the matrix $A_{k+1} \otimes \operatorname{id}_{k+1}$. 

Clearly this $3(k+1)$ linear relations represented by rows of the matrix $A_{k+1}$ are linearly independent.
\end{proof}

\section{Eliminating derivatives of theta functions}

In this section we express derivatives $\theta_i'(a)$ in terms of theta functions $\theta_i(a)$ themselves. This allow us to write a presentation  of the vector bundle $\F_a^{k+1}$ in terms of parameters $a_i=\theta_i(a)$ and completely eliminate theta functions and their derivatives from the formulas. 

\begin{Thm}The following relation between derivatives of theta functions and theta functions at the double argument is true
$$
\begin{pmatrix}
\theta'_0(a)\\
\theta'_2(a)\\
\theta'_1(a)
\end{pmatrix}
\doteq
s(a) \begin{pmatrix}
\theta_0(a)\\
\theta_2(a)\\
\theta_1(a)
\end{pmatrix}
+
\begin{pmatrix}
\theta_0(-2a)\\
\theta_2(-2a)\\
\theta_1(-2a)
\end{pmatrix},
$$
where $s(a)$ is a undermined parameter depending on $a$ and $\doteq$ means that two sides are equal up to a nonzero scalar which does not depend on $a$.
\end{Thm}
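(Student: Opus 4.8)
The plan is to reduce the asserted vector identity to its effect on the Moore matrices and then to produce the required scalar relations from the cubic theta identities of Subsection 2.3 together with their duplication specializations. Concretely, the three component identities $\theta_i'(a) \doteq s(a)\theta_i(a) + \theta_i(-2a)$, after restoring the factors $\theta_j(z)$ that build the Moore matrix, are exactly the statement $M'_{a,x} \doteq s(a) M_{a,x} + M_{-2a,x}$, which is precisely the form in which the relation feeds into the presentation $A_{k+1}$ of the main theorem (where the doubled points $-2a, 4a, \ldots$ appear). So it suffices to establish the three scalar identities with a \emph{single} function $s(a)$ and a \emph{single} nonzero constant.

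First I would differentiate each of the three cubic identities of Subsection 2.3 with respect to the parameter $a$. Because $\frac{d}{da}\theta_i(z+a) = \frac{\partial}{\partial z}\theta_i(z+a)$, subtracting the $z$-derivative of the same identity cancels all $\theta_i'(z+a)$ terms and leaves a relation among the products $\theta_i'(a)\theta_j(z)\theta_k(z+a)$ and $\theta_i(a)\theta_j'(z)\theta_k(z+a)$. Specializing $z$ to the origin and using the parities of the Hesse theta constants (with $\theta_0$ even and $\theta_1(-z)=\theta_2(z)$, so that $\theta_0'(0)=0$ and $\theta_1'(0)=-\theta_2'(0)$) collapses these to linear relations in the unknowns $\theta_0'(a),\theta_1'(a),\theta_2'(a)$. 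In parallel, setting $z=a$ in the cubic identities yields the duplication formula, expressing the doubled-point coordinates $\theta_i(-2a)$ through quadratics in the $\theta_j(a)$; substituting these lets me cast the inhomogeneous terms of the linear system into the claimed $\theta_i(-2a)$ shape, from which I solve for the vector $(\theta_0'(a),\theta_1'(a),\theta_2'(a))$, with $s(a)$ emerging as the common coefficient of $\theta_i(a)$.

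The main obstacle is the simultaneity built into the statement: one scalar $s(a)$ and one constant must serve all three components at once, whereas a naive solution of the linear system would a priori let the coefficient of $\theta_i(a)$ depend on the index $i$. I expect to dispose of this using the Heisenberg symmetry of the Hesse configuration: translation by the three-torsion points $\tfrac{1}{3},\tfrac{\tau}{3}\in E[3]$ cyclically permutes $\theta_0,\theta_1,\theta_2$ up to common scalars, commutes with $\frac{d}{da}$, and is compatible with $a\mapsto -2a$ because $[-2]$ fixes $E[3]$ pointwise (if $3t=0$ then $-2t=t$, so $-2(a+t)=-2a+t$). Hence any one of the three scalar identities is carried to the others by this action, forcing the coefficient functions to coincide and pinning the free parameter down to a single $s(a)$. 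The final step is to check, via the automorphy-factor computations of Section 3, that after this normalization the two sides transform identically under $\Lambda$, so that the residual ambiguity collapses to the stated nonzero constant; this transformation matching is the technical heart of the argument.
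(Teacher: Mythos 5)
Your overall strategy---extract a linear system for the unknowns $\theta_0'(a),\theta_1'(a),\theta_2'(a)$ from the three cubic theta identities, identify its inhomogeneous part with the doubled point via a duplication formula, and let the kernel of the coefficient matrix account for the parameter $s(a)$---is essentially the paper's strategy (the paper differentiates the identities in $a$ and specializes at $z=-a$, rather than your $\partial_a-\partial_z$ cancellation followed by $z=0$; both routes lead to the same skew-symmetric system). But your execution contains a fatal factual error: the parities you assign to the Hesse theta functions are wrong. As the paper records (and as follows from the series expansions), $\theta_0(-z)=-\theta_0(z)$, $\theta_1(-z)=-\theta_2(z)$, $\theta_2(-z)=-\theta_1(z)$; in particular $\theta_0$ is \emph{odd} with a \emph{simple} zero at the origin (the origin is the inflection point $[0:1:-1]$, and $\theta_0$ cuts out three distinct inflection points on the line $x_0=0$), so $\theta_0'(0)\neq 0$ and $\theta_1'(0)=+\theta_2'(0)$. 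Your claims that $\theta_0$ is even, $\theta_0'(0)=0$ and $\theta_1'(0)=-\theta_2'(0)$ are impossible for this parametrization. This is not a cosmetic slip: in your own scheme the inhomogeneous term of the first equation at $z=0$ is $\theta_0'(0)\theta_0(a)^2+\bigl(\theta_1'(0)+\theta_2'(0)\bigr)\theta_1(a)\theta_2(a)$ (with cyclic analogues), and with your parities these all vanish identically; the system becomes homogeneous and you would ``prove'' $\theta_i'(a)\doteq s(a)\theta_i(a)$ with no $\theta_i(-2a)$ term at all, which is false. With the correct parities, and using $\psi\theta_0'(0)+\theta_1'(0)+\theta_2'(0)=0$ from differentiating the Hesse relation at the origin, the inhomogeneous terms become $\theta_0'(0)\bigl(\theta_0(a)^2-\psi\theta_1(a)\theta_2(a)\bigr)$ etc., exactly the paper's right-hand side, after which the Hesse relation and the duplication formula finish the argument.

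Your two structural worries are also misplaced. The ``simultaneity'' concern (that the coefficient of $\theta_i(a)$ might depend on $i$) misreads the statement: the three derivatives are coupled in one linear system whose coefficient matrix is the cross-product matrix of the vector $\bigl(\theta_0(a),\theta_2(a),\theta_1(a)\bigr)$; its kernel is one-dimensional and spanned by that same vector, so the homogeneous ambiguity is automatically a single function $s(a)$ times the theta vector --- no Heisenberg-symmetry argument is needed, and the one you sketch is only asserted, not carried out (translation by $E[3]$ introduces scalar factors that would themselves need tracking). Likewise the concluding ``automorphy matching'' step cannot work as stated: $\theta_i'(a)$ is not a theta function (under $a\mapsto a+\lambda$ it picks up an inhomogeneous term $e'(\lambda,a)\theta_i(a)$), and $\theta_i(-2a)$ transforms under a different automorphy factor, so the two sides of the identity do not transform identically under $\Lambda$; the identity is consistent only because the undetermined function $s(a)$ absorbs the discrepancy, and the overall nonzero constant is not --- and in the paper's proof need not be --- pinned down by transformation behaviour, since it simply records the theta constants $\theta_1(0)$ and $\theta_0'(0)$ dropped when passing to $\doteq$.
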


\begin{proof}
Our identities for derivatives of theta functions when $z=-a$ give the following system of linear equations
\begin{gather*}
\begin{pmatrix}
\theta_0 \theta_0(-a)  & \theta_1 \theta_2(-a) & \theta_2 \theta_1(-a)\\
\theta_2 \theta_2(-a) & \theta_0 \theta_1(-a) & \theta_1 \theta_0(-a)\\
\theta_1 \theta_1(-a) & \theta_2 \theta_0(-a) & \theta_0 \theta_1(-a)
\end{pmatrix}
\begin{pmatrix}
\theta'_0(a)\\
\theta'_2(a)\\
\theta'_1(a)
\end{pmatrix} = \\
-\begin{pmatrix}
\theta_0(a) \theta_0(-a) \theta'_0 + \theta_2(a)\theta_2(-a)\theta'_1 + \theta_1(a)\theta_1(-a)\theta'_2\\
\theta_2(a) \theta_1(-a) \theta'_0 + \theta_1(a)\theta_0(-a)\theta'_1 + \theta_0(a)\theta_2(-a)\theta'_2\\
\theta_1(a) \theta_2(-a) \theta'_0 + \theta_0(a)\theta_1(-a)\theta'_1 + \theta_2(a)\theta_0(-a)\theta'_2
\end{pmatrix},
\end{gather*}

where we use conventions $\theta_i=\theta_i(0)$ and $\theta_i'=\theta_i'(0)$. Values of theta functions at the point $-a$ are related to the values at the point $a$ as 
 \begin{gather*}
\theta_0(-a) = - \theta_0(a), \\
\theta_1(-a) = - \theta_2(a), \\
\theta_2(-a) = - \theta_1(a).
\end{gather*}
These identities immediately follow from a presentation of theta functions as a series, cf \cite{O2002}. 

Theta functions map origin of $\C$ to the origin of $E$:
$$
[\theta_0:\theta_1:\theta_2] = [0:1:-1].
$$
Therefore we can eliminate theta constants $\theta_0$ and $\theta_2$ from the left hand side of the system
$$
\theta_1 \begin{pmatrix}
0 & \theta_1(a) & -\theta_2(a)\\
-\theta_1(a) & 0 & \theta_0(a)\\
\theta_2(a) & -\theta_0(a) & 0
\end{pmatrix}
\begin{pmatrix}
\theta'_0(a)\\
\theta'_2(a)\\
\theta'_1(a)
\end{pmatrix}
=
-\begin{pmatrix}
\theta_0^2(a)\theta'_0 + \theta_1(a)\theta_2(a)\theta'_1 + \theta_1(a)\theta_2(a)\theta'_2\\
\theta_2^2(a)\theta'_0 + \theta_0(a)\theta_1(a)\theta'_1 + \theta_0(a)\theta_1(a)\theta'_2\\
\theta_1^2(a)\theta'_0 + \theta_0(a)\theta_2(a)\theta'_1 + \theta_0(a)\theta_2(a)\theta'_2
\end{pmatrix}.
$$

We drop the theta constant $\theta_1$ from the system by changing $=$ sign to $\doteq$, because we only need a solution up to a multiplication on a non-zero scalar
$$
\begin{pmatrix}
0 & \theta_1(a) & -\theta_2(a)\\
-\theta_1(a) & 0 & \theta_0(a)\\
\theta_2(a) & -\theta_0(a) & 0
\end{pmatrix}
\begin{pmatrix}
\theta'_0(a)\\
\theta'_2(a)\\
\theta'_1(a)
\end{pmatrix}
=
-\begin{pmatrix}
\theta_0^2(a)\theta'_0 + \theta_1(a)\theta_2(a)\theta'_1 + \theta_1(a)\theta_2(a)\theta'_2\\
\theta_2^2(a)\theta'_0 + \theta_0(a)\theta_1(a)\theta'_1 + \theta_0(a)\theta_1(a)\theta'_2\\
\theta_1^2(a)\theta'_0 + \theta_0(a)\theta_2(a)\theta'_1 + \theta_0(a)\theta_2(a)\theta'_2
\end{pmatrix}.
$$ 

Differentiating identity 
$$
\theta_0^3(z)+\theta_1^3(z)+\theta_2^3(z)-3 \psi \theta_0(z) \theta_1(z) \theta_2(z) = 0
$$
at $z=0$ we get
$$
\psi \theta_0'+\theta_1'+\theta_2'=0.
$$

We use this to rewrite right hand side of the system
\begin{gather*}
-\begin{pmatrix}
\theta_0^2(a)\theta'_0 + \theta_1(a)\theta_2(a)\theta'_1 + \theta_1(a)\theta_2(a)\theta'_2\\
\theta_2^2(a)\theta'_0 + \theta_0(a)\theta_1(a)\theta'_1 + \theta_0(a)\theta_1(a)\theta'_2\\
\theta_1^2(a)\theta'_0 + \theta_0(a)\theta_2(a)\theta'_1 + \theta_0(a)\theta_2(a)\theta'_2
\end{pmatrix}=-\theta_0'
\begin{pmatrix}
\theta_0^2(a) - \psi \theta_1(a)\theta_2(a)\\
\theta_2^2(a) - \psi \theta_0(a)\theta_1(a)\\
\theta_1^2(a) - \psi \theta_0(a)\theta_2(a)
\end{pmatrix}
\doteq \\
\begin{pmatrix}
\psi \theta_1(a)\theta_2(a) - \theta_0^2(a)\\
\psi \theta_0(a)\theta_1(a) - \theta_2^2(a) \\
\psi \theta_0(a)\theta_2(a) - \theta_1^2(a)
\end{pmatrix}=
\begin{pmatrix}
\dfrac{\theta_1(a)^3+\theta_2(a)^3-2\theta_0(a)^3}{\theta_0(a)}\\
\dfrac{\theta_0(a)^3+\theta_1(a)^3-2\theta_2(a)^3}{\theta_2(a)}\\
\dfrac{\theta_0(a)^3+\theta_2(a)^3-2\theta_1(a)^3}{\theta_1(a)}
\end{pmatrix}.
\end{gather*}
Combining this together we get the following system of equations for derivatives of theta functions
$$
\begin{pmatrix}
0 & \theta_1(a) & -\theta_2(a)\\
-\theta_1(a) & 0 & \theta_0(a)\\
\theta_2(a) & -\theta_0(a) & 0
\end{pmatrix}
\begin{pmatrix}
\theta'_0(a)\\
\theta'_2(a)\\
\theta'_1(a)
\end{pmatrix}
\doteq
\begin{pmatrix}
\dfrac{\theta_1(a)^3+\theta_2(a)^3-2\theta_0(a)^3}{\theta_0(a)}\\
\dfrac{\theta_0(a)^3+\theta_1(a)^3-2\theta_2(a)^3}{\theta_2(a)}\\
\dfrac{\theta_0(a)^3+\theta_2(a)^3-2\theta_1(a)^3}{\theta_1(a)}
\end{pmatrix}.
$$
One can easily check that vector
$$
\begin{pmatrix}
\theta_0(-2a)\\
\theta_2(-2a)\\
\theta_1(-2a)
\end{pmatrix}
\doteq
\begin{pmatrix}
\dfrac{\theta_2(a)^3-\theta_1(a)^3}{\theta_1(a)\theta_2(a)}\\
\dfrac{\theta_1(a)^3-\theta_0(a)^3}{\theta_0(a)\theta_1(a)}\\
\dfrac{\theta_0(a)^3-\theta_2(a)^3}{\theta_0(a)\theta_2(a)}
\end{pmatrix}
$$
is a particular solution of the non-homogeneous system and the vector
$$
s(a) \begin{pmatrix}
\theta_0(a)\\
\theta_2(a)\\
\theta_1(a)
\end{pmatrix}
$$
is the general solution of the homogeneous system. Here $s(a)$ is a parameter that depends on the point $a$. Thus the general solution of the system is
$$
\begin{pmatrix}
\theta'_0(a)\\
\theta'_2(a)\\
\theta'_1(a)
\end{pmatrix}
\doteq
s(a) \begin{pmatrix}
\theta_0(a)\\
\theta_2(a)\\
\theta_1(a)
\end{pmatrix}
+
\begin{pmatrix}
\theta_0(-2a)\\
\theta_2(-2a)\\
\theta_1(-2a)
\end{pmatrix},
$$
where $s(a)$ is a parameter.
\end{proof}

\begin{Rem} 
It is natural and expected that a formula for the derivatives of theta functions involve doubling of the argument. Because a line tangent to a smooth elliptic curve at a point $a$ also intersect the elliptic curve at the point $-2a$.
\end{Rem}

In order to apply this theorem to matrix factorizations we reformulate it in terms of Moore matrices. 

\begin{Cor}
$M'_{a,x} \doteq s(a) M_{a,x}+M_{-2a,x}$.
\end{Cor}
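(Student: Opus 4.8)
The plan is to exploit the fact that the Moore matrix $M_{a,x}$ depends \emph{linearly} on the triple of values $\big(\theta_0(a),\theta_1(a),\theta_2(a)\big)$, with coefficients built only from the functions $\theta_j(z)$ and hence independent of the parameter $a$. Concretely, I would introduce the linear map $\M\colon \C^3 \to \operatorname{Mat}_{3\times 3}$ sending a vector $b=(b_0,b_1,b_2)$ to
$$
\M(b)=\begin{pmatrix}
b_0\theta_0(z) & b_2\theta_2(z) & b_1\theta_1(z) \\
b_2\theta_1(z) & b_1\theta_0(z) & b_0\theta_2(z) \\
b_1\theta_2(z) & b_0\theta_1(z) & b_2\theta_0(z)
\end{pmatrix}.
$$
Writing $\theta(a)=\big(\theta_0(a),\theta_1(a),\theta_2(a)\big)$, and $\theta'(a)$, $\theta(-2a)$ analogously, the definitions in Section 2 give $M_{a,x}=\M(\theta(a))$ and $M_{-2a,x}=\M(\theta(-2a))$, while differentiating $M_{a,x}$ entrywise in $a$ yields $M'_{a,x}=\M(\theta'(a))$.

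With this reformulation the corollary becomes a formal consequence of the preceding theorem. The theorem supplies a single nonzero constant $c$, independent of $a$, for which the components satisfy $\theta_i'(a)=c\big(s(a)\theta_i(a)+\theta_i(-2a)\big)$ for $i=0,1,2$; here I would note that the row permutation $(0,2,1)$ used in the statement of the theorem is applied identically to all three vectors, so the identity is genuinely componentwise and the index $i$ matches on both sides. Feeding $\theta'(a)$ into $\M$ and using linearity then gives
$$
M'_{a,x}=\M\big(\theta'(a)\big)=c\,\Big(s(a)\,\M\big(\theta(a)\big)+\M\big(\theta(-2a)\big)\Big)=c\big(s(a)M_{a,x}+M_{-2a,x}\big),
$$
which is exactly $M'_{a,x}\doteq s(a)M_{a,x}+M_{-2a,x}$, the constant $c$ being absorbed into the symbol $\doteq$.

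The argument is essentially bookkeeping, so I do not expect a deep obstacle; the only points requiring care are (i) verifying that substituting $\theta_i(-2a)$ for $\theta_i(a)$ in every entry of $M_{a,x}$ reproduces precisely $M_{-2a,x}$, which is immediate from the definition of the Moore matrix, and (ii) confirming that the scalar ambiguity hidden in $\doteq$ is one global constant rather than three independent ones, so that it factors out of the matrix uniformly. Both are resolved by reading the theorem as a single vector identity with a single constant $c$, after which the linearity of $\M$ does all the remaining work.
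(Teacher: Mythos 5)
Your proof is correct and is exactly the reasoning the paper leaves implicit: the paper states this corollary without any proof, as an immediate reformulation of the preceding theorem, precisely because $M_{a,x}$, $M'_{a,x}$ and $M_{-2a,x}$ are all obtained by feeding the vectors $\theta(a)$, $\theta'(a)$, $\theta(-2a)$ into the same linear (in the parameter entries) Moore-matrix construction. Your explicit bookkeeping with the linear map $\M$ spells out that step, so the two approaches coincide.
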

 
In the solution we have two undetermined parameters: we found the solution only up to a non-zero constant and also we have parameter $s(a)$. But we don't need to know these parameters to find matrix factorizations: we need a non-trivial extension, which particular extension we use is not important and thus common non-zero scalar does not play any role, parameter $s(a)$ can always be eliminated by row/column operations.

Let us illustrate this by the rank two case. We have
$$
A_2 = \begin{pmatrix}
M_{a,x} & M'_{a,x} \\
0 & M_{a,x}
\end{pmatrix} \sim
\begin{pmatrix}
M_{a,x} & M'_{a,x} - s(a) M_{a,x} \\
0 & M_{a,x}
\end{pmatrix} =
\begin{pmatrix}
M_{a,x} & M_{-2a,x} \\
0 & M_{a,x}
\end{pmatrix}.
$$

Therefore, we get
\begin{Cor}
The Ulrich bundle $\F_a^2$ of rank $2$ over elliptic curve $E$ is the cokernel of the matrix $A$
$$
\xymatrix{
0 \ar[r] & \O_{\P^2}^6 (-1) \ar[r]^{A_2} & \O_{\P^2}^6 \ar[r] & \F_a^2 \ar[r] & 0,
}
$$
where
$$
A_2 = \begin{pmatrix}
M_{a,x} & M_{-2a,x} \\
0 & M_{a,x}
\end{pmatrix}.
$$
\end{Cor}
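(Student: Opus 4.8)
The plan is to deduce this directly from the rank-two matrix factorization already established together with the relation $M'_{a,x} \doteq s(a) M_{a,x} + M_{-2a,x}$ from the preceding corollary, using only the fact that the cokernel of a presentation matrix is unchanged, up to isomorphism, under pre- and post-composition with automorphisms of the free sheaves. All of the genuine content was extracted in the derivative-elimination theorem and its Moore-matrix reformulation; what remains is bookkeeping in elementary operations.

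First I would recall that $\F_a^2 = \coker(A)$ with $A = \begin{pmatrix} M_{a,x} & M'_{a,x} \\ 0 & M_{a,x} \end{pmatrix}$, regarded as a morphism $\O_{\P^2}^6(-1) \to \O_{\P^2}^6$. The entries of $A$ are linear forms in $x$, whereas $s(a)$ is a scalar depending only on the fixed parameter $a$ and not on the coordinates $x$. Consequently the block matrix $P = \begin{pmatrix} I_3 & -s(a) I_3 \\ 0 & I_3 \end{pmatrix}$ has constant, degree-zero entries and is therefore an automorphism of $\O_{\P^2}^6(-1)$ compatible with the twist.

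Next I would compute $A P = \begin{pmatrix} M_{a,x} & M'_{a,x} - s(a) M_{a,x} \\ 0 & M_{a,x} \end{pmatrix}$ and substitute the corollary's relation, so that the top-right block becomes $c\, M_{-2a,x}$ for some nonzero constant $c$ (the scalar implicit in $\doteq$). Since $P$ is an automorphism of the source, $\coker(A) \cong \coker(AP)$. To absorb $c$ I would conjugate by the scalar block matrices $D = \operatorname{diag}(I_3, c\, I_3)$ on the target and $D^{-1} = \operatorname{diag}(I_3, c^{-1} I_3)$ on the source, obtaining $D (AP) D^{-1} = \begin{pmatrix} M_{a,x} & M_{-2a,x} \\ 0 & M_{a,x} \end{pmatrix} = A_2$; both $D$ and $D^{-1}$ are automorphisms of $\O_{\P^2}^6$ and $\O_{\P^2}^6(-1)$ respectively, so $\coker(A) \cong \coker(A_2)$, which is the assertion.

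There is essentially no remaining obstacle. The only point demanding care is to verify that every transformation used is a legitimate automorphism of the relevant free sheaf, namely that $s(a)$ and $c$ are scalars independent of $x$, so that $P$, $D$, and $D^{-1}$ have degree-zero entries and hence respect the $\O(-1)$-twist. This guarantees that each operation preserves the cokernel up to canonical isomorphism, and identifies $\F_a^2$ as $\coker(A_2)$.
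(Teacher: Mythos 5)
Your proposal is correct and takes essentially the same route as the paper: start from the presentation of $\F_a^2$ by $\begin{pmatrix} M_{a,x} & M'_{a,x} \\ 0 & M_{a,x} \end{pmatrix}$, substitute the relation $M'_{a,x} \doteq s(a) M_{a,x}+M_{-2a,x}$, and eliminate $s(a)$ by an elementary column operation, all of which preserve the cokernel. The only difference is that you absorb the implicit nonzero scalar of $\doteq$ by an explicit diagonal conjugation, whereas the paper merely remarks that this scalar plays no role; your handling of that point is slightly more careful but not a different argument.
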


This result for rank two Ulrich bundles was proven in \cite{BP2015} by different methods.

For higher rank matrix factorizations we need to generalize the formula for derivatives.

\begin{Cor}
$$
M_{a,x}^{(k+1)} = \sum_{l=0}^{k-1} \sum_{i=0}^{k-l} s^{(i)}(a) M_{(-2)^l a,x}^{(k-l-i)}+M_{(-2)^{k+1}a,x}
$$
\end{Cor}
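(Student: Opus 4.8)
The plan is to prove the formula by induction on $k$, with the preceding Corollary $M'_{a,x} \doteq s(a) M_{a,x} + M_{-2a,x}$ serving as the base case: it is exactly the assertion for the first derivative $M^{(1)}_{a,x}$, in which the double sum is empty and only the top term $M_{-2a,x}$ survives. For the inductive step I would assume the displayed expression for $M^{(k+1)}_{a,x}$ and apply $d/da$ to both sides, so that the left-hand side becomes $M^{(k+2)}_{a,x}$, and the task reduces to reorganizing the derivative of the right-hand side into the same shape with $k$ replaced by $k+1$.

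Differentiating the right-hand side, two mechanisms produce the new terms. On each scalar factor $s^{(i)}(a)$ the Leibniz rule merely raises the order, $s^{(i)}(a)\mapsto s^{(i+1)}(a)$, which accounts for the contributions whose $s$-order has gone up by one. On each Moore-matrix factor I would combine the chain rule with the base relation: a single $d/da$ either raises the derivative superscript by one, or, when it lands on the point-argument, reproduces through $M'_{b,x}\doteq s(b)M_{b,x}+M_{-2b,x}$ evaluated at $b=(-2)^l a$ both an $s$-multiple of the same matrix and the next doubled matrix $M_{(-2)^{l+1}a,x}$. Iterating the ``always double'' branch starting from $M_{a,x}$ is what manufactures the single clean top term: after $k+2$ differentiations the extremal matrix is $M_{(-2)^{k+2}a,x}$, and its coefficient is the nonzero product of the chain-rule constants $(-2)^l$, so up to the scalar recorded by $\doteq$ it appears with coefficient one, matching the statement at level $k+1$.

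The step I expect to be the main obstacle is the bookkeeping that shows these contributions collapse into exactly the claimed double sum with its stated ranges, rather than a larger or shifted index set. Here I would rely on the paper's convention that $s$ is an undetermined function of $a$, pinned down only up to the ambiguities of the construction: the chain-rule factors $(-2)^l$ and the shifted evaluations $s^{(i)}((-2)^l a)$ that honest differentiation produces are not tracked individually but folded into the generic coefficient symbols $s^{(i)}(a)$, and the whole identity is read up to the nonzero scalar encoded by $\doteq$. Concretely, I would fix a normal form — keeping the $l=0$ matrices as genuine derivatives $M^{(j)}_{a,x}$ and re-expressing slot-derivatives at the doubled points via the base relation — and then verify, by the reindexing $i\mapsto i+1$ (to absorb the raised $s$-orders) together with $l\mapsto l+1$ (to absorb the new doublings), that the collected terms reassemble into $\sum_{l}\sum_{i} s^{(i)}(a)\,M^{(k+1-l-i)}_{(-2)^l a,x}+M_{(-2)^{k+2}a,x}$, the upper limit on $l$ advancing by one exactly as the inductive form $(k+1)-1$ demands.

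Finally, since for the intended application only the structure of $A_{k+1}$ is needed and, as the text already notes, the parameter $s(a)$ can be removed by row and column operations, I would close by emphasizing that the undetermined coefficients need not be computed: it suffices that every term is supported on a doubled Moore matrix $M_{(-2)^l a,x}$ (or, at $l=0$, a derivative of it) with an $a$-dependent scalar, together with the single distinguished term $M_{(-2)^{k+1}a,x}$. Establishing that this is the entire list of surviving supports, and that nothing outside the stated ranges can appear, is precisely the delicate combinatorial verification, and it is where I would spend the bulk of the careful work.
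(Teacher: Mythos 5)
Your overall strategy coincides with the paper's: both proofs iterate the relation $M'_{a,x}\doteq s(a)M_{a,x}+M_{-2a,x}$, absorbing the chain-rule constants $(-2)^l$, the Leibniz binomial coefficients, and the shifted evaluations $s^{(i)}((-2)^l a)$ into the undetermined symbols $s^{(i)}(a)$ and into $\doteq$. The only organizational difference is that the paper first differentiates the base relation $k$ times in one Leibniz stroke, obtaining $M^{(k+1)}_{a,x}\doteq\sum_{i=0}^{k}s^{(i)}(a)M^{(k-i)}_{a,x}+M^{(k)}_{-2a,x}$, and then repeatedly expands the trailing term one doubling at a time, whereas you differentiate the full identity once per inductive step; these are equivalent bookkeeping schemes.

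The genuine problem is the verification you defer to the end ("nothing outside the stated ranges can appear"): that check fails, because the stated ranges are off by one, and your sketch papers over exactly this point. Already at $k=0$ your base-case identification is inconsistent: with the double sum empty, the displayed formula asserts $M'_{a,x}\doteq M_{-2a,x}$, which is \emph{not} the preceding Corollary --- the term $s(a)M_{a,x}$ has been lost. In your inductive step, differentiating the top term gives, via the base relation at the point $(-2)^{k+1}a$ (constants absorbed), the contribution $s(a)M_{(-2)^{k+1}a,x}$, whose doubling index $l=k+1$ exceeds the bound $(k+1)-1=k$ required at the next level, so the induction with the printed ranges never closes. The correct statement has outer sum running to $l=k$:
$$
M^{(k+1)}_{a,x} \doteq \sum_{l=0}^{k}\sum_{i=0}^{k-l}s^{(i)}(a)\,M^{(k-l-i)}_{(-2)^l a,x} + M_{(-2)^{k+1}a,x},
$$
and with this range your argument does work: the $k=0$ case is literally the preceding Corollary, and the reindexings $i\mapsto i+1$, $l\mapsto l+1$ close the induction, the new $l=k+1$ block consisting of the single term $s(a)M_{(-2)^{k+1}a,x}$. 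Note that the paper's own proof contains the same off-by-one: its displayed recursion visibly generates terms with $l$ up to $k$ (the last expansion is $M^{(1)}_{(-2)^k a,x}\doteq s(a)M_{(-2)^k a,x}+M_{(-2)^{k+1}a,x}$), yet its final line drops the $l=k$ term. The slip is harmless for the application, since every term in the double sum carries an $s$-coefficient and is eliminated by the row/column operations used in the final Corollary, but a correct write-up must prove the formula with the corrected range rather than the printed one.
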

\begin{proof}
We apply formula $M'_{a,x} \doteq s(a) M_{a,x}+M_{-2a,x}$ inductively $k$ times 
\begin{gather*}
M_{a,x}^{k+1} \doteq \sum_{i=0}^{k} s^{(i)}(a) M_{a.x}^{(k-i)} + M_{-2a,x}^{(k)} \doteq \\
\sum_{i=0}^{k} s^{(i)}(a) M_{a,x}^{(k-i)} + \sum_{i=0}^{k-1} s^{(i)}(a) M_{a.x}^{(k-i-1)} + M_{4a,x}^{(k-1)} \doteq 
\hdots \\ 
\doteq \sum_{l=0}^{k-1} \sum_{i=0}^{k-l} s^{(i)}(a) M_{(-2)^l a,x}^{(k-l-i)}+M_{(-2)^{k+1}a,x}.
\end{gather*}
\end{proof}

Finally we get a presentation of the Ulrich bundle $\F_a^{k+1}$ of rank $k+1$.

\begin{Cor}
The Ulrich bundle $\F_a^{k+1}$ of rank $k+1$ over elliptic curve $E$ is the cokernel of the matrix $A_{k+1}$
$$
\xymatrix{
0 \ar[r] & \O_{\P^2}^{3(k+1)} (-1) \ar[r]^{A_{k+1}} & \O_{\P^2}^{3(k+1)} \ar[r] & \F_a^{k+1} \ar[r] & 0,
}
$$
where
$$
A_{k+1} = \begin{pmatrix} 
M_{a,x} &  {k \choose 1} M_{2a,x} & {k \choose 2} M_{4a,x} & \ldots & {k \choose {k-1}} M_{(-a)^{k-1}a,x} & M_{(-2)^ka,x}\\
0 & M_{a,x} & { {k-1} \choose 1} M_{-2a,x} & \ldots & {{k-1} \choose {k-2}} M_{(-2)^{k-2}a,x} & M_{(-2)^{k-1}a,x} \\
0 & 0 & M_{a,x} & \vdots & {{k-2} \choose {k-3}} M_{-2a,x} & M_{(-a)^{k-2}a,x} \\
\vdots & \vdots & \vdots & \ddots & \vdots & \vdots \\
0 & 0 & 0 & \ldots & M_{a,x} & M_{-2a,x} \\ 
0 & 0 & 0 & \ldots & 0 & M_{a,x}
\end{pmatrix}
$$
\end{Cor}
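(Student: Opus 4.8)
The plan is to begin from the derivative-form factorization $A_{k+1}$ established in the previous Theorem, whose blocks are $\binom{k-p+1}{q-p} M_{a,x}^{(q-p)}$, and to transform it into the stated doubled-point matrix by a sequence of block row and column operations whose coefficients depend only on the point $a$ and never on the homogeneous coordinates $x$. Such operations amount to multiplying $A_{k+1}$ on the left and right by matrices that are invertible over $\O_{\P^2}$, since from the point of view of the sheaf the quantities $s(a)$, $s^{(i)}(a)$, and the $\doteq$-scalars are constants. Hence they do not change the isomorphism class of the cokernel, and it suffices to exhibit them and read off the resulting entries.

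The engine is the relation $M'_{a,x} \doteq s(a) M_{a,x} + M_{-2a,x}$ from the earlier Corollary, together with its iterated form supplied by the generalized-derivative Corollary: each $M_{a,x}^{(j)}$ equals, up to a nonzero constant, the Moore matrix $M_{(-2)^j a,x}$ at the doubled point, plus a combination of the $s^{(i)}(a)\,M_{(-2)^l a,x}$ with $l<j$. First I would record this "leading term plus lower $s$-tail" decomposition for every block: the top-order part is the doubled-point Moore matrix we want, and the remainder is an $a$-dependent, $x$-independent combination of Moore matrices at the strictly shallower points $(-2)^l a$. This is exactly the single step performed on each $2\times 2$ block in the rank-two Corollary, now stratified by derivative order.

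Then I would run the elimination by induction on $k$, using that the lower-right $k\times k$ block of $A_{k+1}$ is precisely the rank-$k$ matrix with $k$ replaced by $k-1$; equivalently, one processes the block columns from left to right. When column $q$ is reached, columns $1,\dots,q-1$ already carry the target entries, and since the $p$-th column is supported in rows $\le p$, subtracting $a$-dependent multiples of $C_1,\dots,C_{q-1}$ from $C_q$ alters only its strict-upper blocks while leaving the diagonal block $M_{a,x}$ untouched. The $s^{(i)}(a)$-tail attached to the entry $M_{a,x}^{(q-p)}$ lies in the span of the Moore matrices $M_{(-2)^l a,x}$ with $l<q-p$, which is exactly what the already-normalized earlier columns provide; subtracting the matching combination clears the tail and leaves the leading block $\binom{k-p+1}{q-p} M_{(-2)^{q-p}a,x}$.

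The main obstacle is the combinatorial bookkeeping of the double sum in the generalized-derivative formula: one must verify that clearing the tail of a given entry does not re-pollute entries already put into target form, and that the coefficients that survive are exactly the binomials $\binom{k-p+1}{q-p}$. The cleanest way to control this is to observe that the total transformation is block-unipotent upper-triangular with $x$-independent entries, hence automatically invertible, and that the binomial pattern is forced by the same Pascal--Leibniz structure already encoded in the automorphy factor $f_a(\lambda,z)$ of the earlier section. The $\doteq$-scalars and the undetermined parameter $s(a)$ are absorbed harmlessly, because the construction only requires a nontrivial extension rather than a canonically normalized one.
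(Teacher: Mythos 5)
Your proposal follows essentially the same route as the paper's own (one-line) proof: starting from the derivative-form factorization of the previous Theorem, using the relation $M'_{a,x} \doteq s(a)M_{a,x}+M_{-2a,x}$ and its iterated version, and eliminating $s(a)$ and its derivatives by block row and column operations with $x$-independent coefficients, inducting on $k$. Your write-up in fact supplies more detail than the paper does --- in particular the observation that only a nontrivial extension, not a normalized one, is needed to absorb the $\doteq$-scalars --- so it is a faithful (and somewhat more careful) expansion of the argument the paper intends.
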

\begin{proof}
This follows by induction from previous formula and column row operations eliminating parameter $s(a)$ and its derivatives.
\end{proof}

\end{document}